\def\input@path{{./graphics/}}
\newcommand{\slope}{s}
\DeclareMathOperator{\Int}{Int}
\DeclareMathOperator{\tr}{tr}
\begin{document}
\title{On symplectic fillings of virtually overtwisted torus bundles}
\author{Austin Christian}
\begin{abstract}
We use Menke's JSJ-type decomposition theorem for symplectic fillings to reduce the classification of strong and exact symplectic fillings of virtually overtwisted contact structures on torus bundles to the same problem for tight lens spaces.  For virtually overtwisted structures on elliptic or parabolic torus bundles, this gives a complete classification.  For virtually overtwisted structures on hyperbolic torus bundles, we show that every strong or exact filling arises from a filling of a tight lens space via round symplectic 1-handle attachment, and we give a condition under which distinct tight lens space fillings yield the same torus bundle filling.
\end{abstract}
\maketitle

\section{Introduction}\label{sec:intro}
A rich source of contact manifolds is provided by symplectic manifolds-with-boundary.  When a symplectic manifold-with-boundary $(W,\omega)$ admits a vector field $Z$ on some neighborhood of $\partial W\subset W$ satisfying $Z\pitchfork\partial W$ and $\mathcal{L}_Z\omega=\omega$, there is naturally a contact structure on $M:=\partial W$, given by $\xi:=\ker\iota_Z\omega$.  In this case, we call $(W,\omega)$ a \emph{strong symplectic filling} of $(M,\xi)$.  This leads to questions of geography and botany.  Namely, with a contact manifold $(M,\xi)$ in hand, we wonder whether a symplectic filling $(W,\omega)$ of $(M,\xi)$ exists and, if so, whether the collection of such fillings can be enumerated.\\

It has long been known that not all contact manifolds admit symplectic fillings.  For instance, in dimension three a necessary condition for a contact manifold to be fillable is that its structure be tight (\cite{eliashberg1990filling},\cite{gromov1985pseudo}), and in fact this condition is not sufficient (\cite{etnyre2002tight}).  Among those contact manifolds which are fillable, some have been found to have unique fillings up to natural notions of equivalence --- e.g., $(S^3,\xi_{std})$ in \cite{eliashberg1990filling},\cite{gromov1985pseudo} --- while others admit infinitely many non-diffeomorphic fillings (c.f. \cite{ozbagci2004contact}).\\

One family of contact 3-manifolds for which the geography and botany questions have seen partial or complete answers is tight lens spaces.  A corollary of the classification of tight contact structures on lens spaces (\cite{giroux2000structures}, \cite{honda2000classification}) is that all of these tight structures admit exact fillings.  Work of McDuff \cite{mcduff1991symplectic} and Plamenevskaya-Van Horn-Morris \cite{plamenevskaya2010planar} has shown that this filling is unique when the lens space has diffeomorphism type $L(p,1)$ --- with the exception of the standard tight structure on $L(4,1)$, which admits two exact fillings.  The corresponding classification for strong fillings holds up to symplectic deformation equivalence and blowup.  In \cite{lisca2008symplectic}, Lisca catalogs the symplectic fillings of the standard contact structure on all 3-dimensional lens spaces.  Fossati has recently given topological constraints on the fillings of more general structures (\cite{fossati2019topological}), as well as a classification up to homeomorphism for fillings of lens spaces which arise from Legendrian surgery on Legendrian representatives of the Hopf link $H\subseteq S^3$ (\cite{fossati2019contact}).  While a complete classification of fillings for tight lens spaces remains open, much progress has been made.

\subsection{Torus bundles}\label{sec:intro-torus-bundles}
In this paper we reduce the classification problem for virtually overtwisted contact structures on torus bundles to the same problem for tight lens spaces.  In \cite{honda2000classification2}, Honda classified the tight contact structures on all torus bundles over $S^1$, and there is a dichotomy between those which are \emph{universally tight} --- meaning that the contact structure remains tight when lifted to the universal cover of the torus bundle --- and those which are \emph{virtually overtwisted} --- meaning that the torus bundle admits a finite-sheeted covering space for which the lifted contact structure is overtwisted.  Our methods apply exclusively to virtually overtwisted contact structures.\\

Given an element $A\in SL(\mathbb{Z},2)$, we define the mapping torus
\[
M_A := (\mathbb{R}^2/\mathbb{Z}^2\times I)/\sim,
\]
where $(\mathbf{x},1)\sim(A\mathbf{x},0)$.  We call $M_A$ the \emph{torus bundle with monodromy $A$}, and notice that the diffeomorphism type of $M_A$ depends only on the conjugacy class of $A$ in $SL(2,\mathbb{Z})$.  We will say that $M_A$ is
\begin{itemize}
	\item \emph{elliptic} if $|\tr~A|<2$;
	\item \emph{parabolic} if $|\tr~A|=2$;
	\item \emph{hyperbolic} if $|\tr~A|>2$.
\end{itemize}
This trichotomy will be used in the statement of our classification result, as will generators
\[
S = \left(\begin{matrix} 0 & 1\\ -1 & 0\end{matrix}\right)
\quad\text{and}\quad
T = \left(\begin{matrix} 1 & 1\\ 0 & 1 \end{matrix}\right)
\]
of $SL(2,\mathbb{Z})$.  It is known (c.f. \cite[Lemma 2.1]{honda2000classification2}) that every conjugacy class of $SL(2,\mathbb{Z})$ can be represented by one of
\begin{itemize}
	\item $A=\pm S$;
	\item $A=\pm T^{-1}S,\pm(T^{-1}S)^2$;
	\item $A=\pm T^n$, $n\in\mathbb{Z}$;
	\item $A=\pm T^{r_0}ST^{r_1}S\cdots T^{r_k}S$, $r_i\leq -2$, $r_0<-2$.
\end{itemize}
In the last case, the choice of conjugacy class representative is not unique, but in all cases the $\pm$ sign is unique.  We will refer to the monodromy $A$ as \emph{positive} or \emph{negative} depending on sign of its conjugacy class representative(s).\\

Ours is not the first study of symplectic fillings for contact torus bundles.  In \cite{ding2001symplectic}, Ding-Geiges showed that every torus bundle admits an infinite family of weakly-but-not-strongly symplectically fillable contact structures, all of which are universally tight.  In \cite{bhupal2014canonical}, Bhupal-Ozbagci show that for certain parabolic and hyperbolic torus bundles, there are precisely two isotopy classes of Stein fillable contact structures, and along the way they construct a Stein filling for every tight contact structure on a torus bundle with positive hyperbolic monodromy.  Golla-Lisca, in \cite{golla2015stein}, construct Stein fillable structures on a large family of torus bundles, with many of these structures being universally tight.  One of these constructions provides a Stein filling for each virtually overtwisted tight contact structure on a torus bundle with negative hyperbolic monodromy.  In \cite{ding2018strong}, Ding-Li consider the question of strong symplectic fillability for some contact structures on negative parabolic and negative hyperbolic torus bundles, and construct a Stein filling for the negative parabolic torus bundle with monodromy $-T^n$, $n\leq -1$.\\

In the present article, we use Menke's JSJ decomposition (\cite{menke2018jsj}) for symplectic fillings to obtain a classification result for fillings of virtually overtwisted contact structures on torus bundles.  Combined with (non-)existence results previously established in \cite{bhupal2014canonical}, \cite{golla2015stein}, and \cite{ding2018strong}, our main result is the following.

\begin{theorem}\label{main-theorem}
Let $M$ be a 3-dimensional torus bundle, $\xi$ a virtually overtwisted tight contact structure on $M$.
\begin{enumerate}[label=(\Alph*)]
	\item If $M$ is an elliptic torus bundle, then $(M,\xi)$ is not strongly symplectically fillable.\label{part:elliptic}
	\item If $M$ is the positive parabolic torus bundle with monodromy $T^n$ ($n\geq 2$), then $(M,\xi)$ is not strongly symplectically fillable.  If $M$ is any other parabolic torus bundle, then $(M,\xi)$ admits a unique strong filling up to symplectic deformation equivalence and blowup.
	\item If $M$ is a hyperbolic torus bundle, then there is a nonempty, finite list $(L(p_1,q_1),\xi_1),\ldots,(L(p_m,q_m),\xi_m)$ of tight lens spaces such that every strong (exact) symplectic filling of $(M,\xi)$ can be obtained from a strong (exact) symplectic filling of $(L(p_i,q_i),\xi_i)$, for some $1\leq i\leq m$, by attaching a round symplectic 1-handle.  In particular, $(M,\xi)$ is fillable.\label{part:hyperbolic}
\end{enumerate}
\end{theorem}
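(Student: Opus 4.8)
The plan is to run every strong or exact filling of $(M,\xi)$ through Menke's JSJ decomposition (\cite{menke2018jsj}), taking the splitting torus to be a fiber of the bundle. The first step is structural and should be isolated as a lemma: a virtually overtwisted $\xi$ on $M_A$ can, by Honda's classification (\cite{honda2000classification2}), be isotoped so that a fiber torus is convex and so that, in the cyclic decomposition of $(M_A,\xi)$ into basic slices recorded by a path in the Farey graph, two consecutive basic slices carry opposite signs; shuffling these slices produces a convex torus $T\subset M_A$ whose neighborhood realizes a \emph{mixed torus} in Menke's sense (or, in degenerate cases, a $2\pi$-Giroux-torsion domain). It is exactly the virtually overtwisted hypothesis that guarantees such a sign change, and hence such a $T$, exists.

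The second step is to feed $T$ into Menke's theorem. Cutting $M_A$ along a fiber yields $T^2\times I$, and Menke's recipe caps the two resulting torus boundary components with solid tori whose slopes are dictated by the dividing curves on $T$; the upshot is a (connected) tight lens space $(M',\xi')$, and any strong (exact) filling $W$ of $(M_A,\xi)$ is obtained from a strong (exact) filling $W'$ of $(M',\xi')$ by attaching a round symplectic $1$-handle. Since Honda's decomposition has only finitely many basic slices, there are only finitely many admissible choices of $T$, hence a finite list $(L(p_1,q_1),\xi_1),\dots,(L(p_m,q_m),\xi_m)$ of lens spaces, and every filling of $(M_A,\xi)$ arises from a filling of one of them. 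This is the content of part \ref{part:hyperbolic}; nonemptiness of the list (and hence fillability) follows at once from the fact that a mixed torus always exists, that all tight lens spaces are fillable, and that attaching the prescribed round handle to such a filling reverses the cut — alternatively one can quote the Stein fillings built in \cite{bhupal2014canonical} and \cite{golla2015stein}.

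For the elliptic case \ref{part:elliptic} and the parabolic case, the same machine applies, but now $M_A$ is ``small'': the cut-and-cap lens space $M'$ lies in a very short, explicit list and the round $1$-handle attachment is correspondingly rigid. In the fillable parabolic cases I expect $M'$ to have the form $L(p,1)$ (or $S^1\times S^2$), whose strong/exact fillings are unique up to symplectic deformation and blowup by McDuff (\cite{mcduff1991symplectic}) and Plamenevskaya--Van Horn-Morris (\cite{plamenevskaya2010planar}); transporting this uniqueness through a single round handle attachment gives the asserted unique filling. For the non-fillable cases — elliptic bundles, and the positive parabolic $T^n$ with $n\ge 2$ — I expect the decomposition to force either genuine $2\pi$-Giroux torsion, so that strong fillability is obstructed, or else a contradiction with the known constraints on symplectic fillings of lens spaces and of torus bundles (for instance a reconstructed filling whose intersection form is incompatible with Lisca's classification). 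Finally, for the ``distinct lens space fillings yield the same bundle filling'' statement: two admissible mixed tori differ by passing across a single basic slice, so the plan is to show that the corresponding two round handle attachments produce symplectic-deformation-equivalent fillings of $(M_A,\xi)$ precisely when the two lens space fillings are related by attaching the symplectic $2$-handle associated to that basic slice.

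The main obstacle will be the first step: verifying that what Honda's normal form hands us is genuinely a \emph{mixed torus} satisfying Menke's technical requirements on the contact structure near it, rather than merely a convex torus with the correct dividing slopes, and cleanly treating the boundary cases in which the natural candidate degenerates into a Giroux torsion domain. A secondary difficulty is purely bookkeeping: as the monodromy $A$ and the basic-slice signs vary, one must pin down exactly which lens spaces and which tight contact structures occur, excise spurious entries, and confirm the nonfillability claims — so that the finite list in part \ref{part:hyperbolic} is correctly identified and the statements in the elliptic and parabolic cases are airtight.
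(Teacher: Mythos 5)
Your overall plan — locate a fiber-isotopic mixed torus using the virtually overtwisted hypothesis, then run Menke's JSJ decomposition — is the same as the paper's, and your identification of the mixed torus via a sign change in the basic-slice decomposition is on target. However, there is a genuine gap in the middle of your argument, at precisely the point the paper has to work hardest.

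You write that Menke's recipe ``caps the two resulting torus boundary components with solid tori whose slopes are dictated by the dividing curves on $T$,'' and you derive finiteness of the lens space list from finiteness of the choices of mixed torus. Neither is correct. The decomposition theorem fixes the dividing slopes of the cut boundary tori, but it does \emph{not} fix the meridian slopes of the solid tori $S_0,S_1$: a priori, the meridian of $S_1$ can be any Farey neighbor of the dividing slope, of which there are infinitely many. So even for one fixed mixed torus, one does not yet have a finite (or even determined) list of candidate lens spaces $M'$. The missing ingredient — the real engine of the proof — is a slope-rotation argument: since $(M',\xi')$ must be tight, the dividing-curve slopes along a family of convex tori sweeping from the core of $S_1$ through $T^2\times I$ to the core of $S_0$ can rotate through at most $\pi$. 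In the elliptic and positive parabolic cases this rules out \emph{every} choice of meridian, giving non-fillability; in the negative parabolic cases it forces the unique choice $M'\simeq S^1\times S^2$, from which uniqueness of the filling follows by McDuff; and in the hyperbolic case it confines the meridian slope to the finite arc $[\Lambda^s,\Lambda^u]$ between the stable and unstable slopes of the monodromy (Lemma~\ref{lemma:hyperbolic-meridian-arc}), which is what produces the finite list. Your alternative suggested obstructions (Giroux torsion, intersection-form incompatibility with Lisca's classification) are vaguer and are not what actually closes those cases; the overtwistedness-from-excess-rotation argument is both more elementary and uniform across all three monodromy types. Without the slope constraint your proof of~\ref{part:hyperbolic} proves neither finiteness nor fillability, and your proofs of~\ref{part:elliptic} and the parabolic case remain conjectural.

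One smaller point: your worry that the mixed torus might degenerate into a $2\pi$-Giroux-torsion domain does not materialize. The only non-minimally-twisting situation occurs for positive parabolic monodromy $T^n$, $n\ge 2$, where the paper simply takes $T^2\times\{1/2\}$ with dividing slope $\infty$ as the mixed torus; that torus is a bona fide mixed torus, and the non-fillability is then read off the slope analysis, not from Giroux torsion.
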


\begin{remark}$ $
\begin{enumerate}
	\item As noted above, some of the existence statements in Theorem \ref{main-theorem} are not new.  Indeed, \ref{part:elliptic} is weaker than \cite[Theorem 1.1]{etnyre2002tight}, which says that virtually overtwisted structures on elliptic torus bundles are not even \emph{weakly} symplectically fillable.  For the parabolic torus bundles, Ding-Li established the existence of a symplectic filling for the unique virtually overtwisted torus bundle with monodromy $-T^n$, $n\leq -1$, in \cite{ding2018strong}.  The existence statement for hyperbolic torus bundles was established in \cite{bhupal2014canonical} and \cite{golla2015stein}.
	
	\item The proof of Theorem \ref{main-theorem} produces explicit attaching data for the round symplectic 1-handles that appear in \ref{part:hyperbolic}.  Namely, we identify Legendrian knots $L^i_-,L^i_+$ in $(L(p_i,q_i),\xi_i)$ so that attaching a round symplectic 1-handle to a filling of $(L(p_i,q_i),\xi_i)$ along standard neighborhoods of $L^i_-$ and $L^i_+$ yields a filling of the torus bundle $(M,\xi)$, for each $1\leq i\leq m$.  Moreover, for $1\leq i\leq m-1$, we obtain $(L(p_{i+1},q_{i+1}),\xi_{i+1})$ from $(L(p_i,q_i),\xi_i)$ by performing $(+1)$-surgery along $L^i_+$ and $(-1)$-surgery along $L^i_-$.
	
	\item While the tight contact structure $\xi$ is virtually overtwisted, the structures $\xi_1,\ldots,\xi_m$ could be either universally tight or virtually overtwisted.
\end{enumerate}
\end{remark}

In Section \ref{sec:hyperbolic} we will give an explicit construction of the list of lens spaces specified by \ref{part:hyperbolic} of Theorem \ref{main-theorem}.  We will see that if $M$ has monodromy
\begin{equation}\label{eq:monodromy}
A = \pm T^{r_0}ST^{r_1}S\cdots T^{r_k}S,
\end{equation}
with $r_0\leq -3$ and $r_i\leq -2$ for $1\leq i\leq k$, then each lens space in the list has the diffeomorphism type of $L(p,q)$, where
\[
-\frac{p}{q} = [r_k,\ldots,r_1]
\quad\text{or}\quad
-\frac{p}{q} = [r_{j-1},r_{j-2},\ldots,r_0,r_k,r_{k-1},\ldots,r_{j+1}]
\]
for some $1\leq j\leq k$.  By $[a_0,\ldots,a_k]$ we mean
\[
[a_0,\ldots,a_k] = a_0-\cfrac{1}{a_1-\cfrac{1}{a_2-\cdots\cfrac{1}{a_k}}}.
\]

This list of lens spaces (along with their contact structures) can be encoded diagramatically as follows.  Per \cite{bhupal2014canonical} and \cite{golla2015stein}, the standard filling of a tight torus bundle with monodromy given by (\ref{eq:monodromy}) is obtained from that of $(S^1\times S^2,\xi_{std})$ by attaching Weinstein 2-handles along the link $\Lambda\subset S^1\times S^2$ depicted in Figure \ref{fig:filling}.  (The knots in this link are stabilized according to the contact structure $\xi$.)  To each $1\leq i\leq k$ we associate a tight lens space $(L_i,\xi_i)$, with a filling obtained from Figure \ref{fig:filling} by erasing the 1-handle and the unknot $K_i$.\\

\begin{figure}
\centering
\def\svgwidth{0.9\textwidth}
\begingroup%
  \makeatletter%
  \providecommand\color[2][]{%
    \errmessage{(Inkscape) Color is used for the text in Inkscape, but the package 'color.sty' is not loaded}%
    \renewcommand\color[2][]{}%
  }%
  \providecommand\transparent[1]{%
    \errmessage{(Inkscape) Transparency is used (non-zero) for the text in Inkscape, but the package 'transparent.sty' is not loaded}%
    \renewcommand\transparent[1]{}%
  }%
  \providecommand\rotatebox[2]{#2}%
  \newcommand*\fsize{\dimexpr\f@size pt\relax}%
  \newcommand*\lineheight[1]{\fontsize{\fsize}{#1\fsize}\selectfont}%
  \ifx\svgwidth\undefined%
    \setlength{\unitlength}{589.11758062bp}%
    \ifx\svgscale\undefined%
      \relax%
    \else%
      \setlength{\unitlength}{\unitlength * \real{\svgscale}}%
    \fi%
  \else%
    \setlength{\unitlength}{\svgwidth}%
  \fi%
  \global\let\svgwidth\undefined%
  \global\let\svgscale\undefined%
  \makeatother%
  \begin{picture}(1,0.16833013)%
    \lineheight{1}%
    \setlength\tabcolsep{0pt}%
    \put(0,0){\includegraphics[width=\unitlength,page=1]{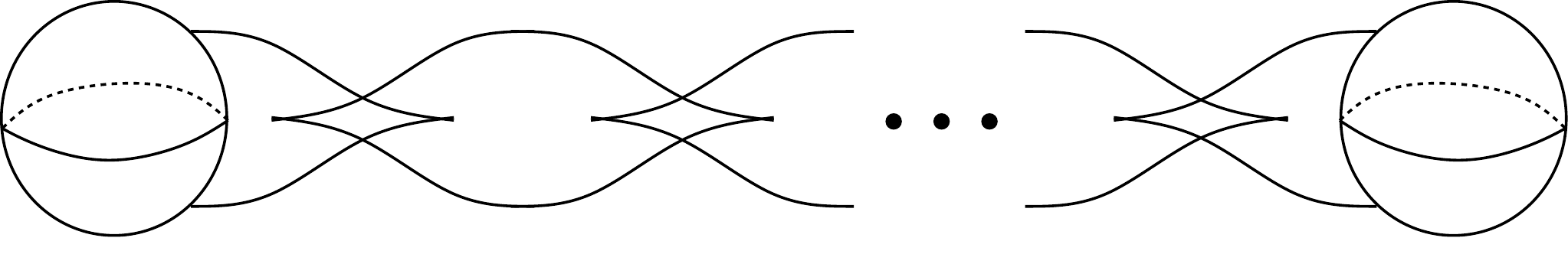}}%
    \put(0.12992069,0.0066786){\color[rgb]{0,0,0}\makebox(0,0)[lt]{\lineheight{1.25}\smash{\begin{tabular}[t]{l}$K_i$\end{tabular}}}}%
    \put(0.2853502,0.00797991){\color[rgb]{0,0,0}\makebox(0,0)[lt]{\lineheight{1.25}\smash{\begin{tabular}[t]{l}$K_{i-1}$\end{tabular}}}}%
    \put(0.65686468,0.0067162){\color[rgb]{0,0,0}\makebox(0,0)[lt]{\lineheight{1.25}\smash{\begin{tabular}[t]{l}$K_{i+1}$\end{tabular}}}}%
    \put(0.83877433,0.00673506){\color[rgb]{0,0,0}\makebox(0,0)[lt]{\lineheight{1.25}\smash{\begin{tabular}[t]{l}$K_i$\end{tabular}}}}%
  \end{picture}%
\endgroup%

\caption{The natural filling of $(M,\xi)$, which has positive monodromy with coefficients $(r_0,r_1,\ldots,r_k)$, is obtained by attaching Weinstein 2-handles to the unique filling of $(S^1\times S^2,\xi_{std})$ along the Legendrian knots $K_0,\ldots,K_k\subset(S^1\times S^2,\xi_{std})$.  For each $0\leq i\leq k$, the knot $K_i$ is stabilized to ensure that $tb(K_i)=r_i+1$, and that $r(K_i)$ is determined by the contact structure $\xi$.  If the monodromy of $M$ is negative, we modify $\Lambda$ by requiring $lk(K_0,K_k)=-1$.}
\label{fig:filling}
\end{figure}

Now let
\[
0 = i_1 < i_2 < \cdots < i_m \leq k
\]
be precisely those indices for which $r_{i_j}\leq -3$.  Then $(M,\xi)$ decomposes into $m$ \emph{continued fraction blocks} --- a notion we will recall in Section \ref{sec:background} --- with a continued fraction block associated to each $i_j$.  Because $\xi$ is virtually overtwisted, $(M,\xi)$ admits a \emph{mixed torus} $T\subset (M,\xi)$, and we use $T$ to build a list $\mathcal{L}$ of tight lens spaces from whose fillings the fillings of $(M,\xi)$ will be obtained.  If $T$ is interior to the continued fraction block associated to $i_j$, for some $1\leq j\leq m$, then $\mathcal{L}$ has just one element --- the lens space $(L_{i_j},\xi_{i_j})$.  Otherwise, $T$ sits at the intersection of two continued fraction blocks, say, associated to $i_j$ and $i_{j+1}$ for some $1\leq j\leq m$.  (Here $i_{m+1}=i_1$.)  In this case
\[
\mathcal{L} = ((L_{i_j},\xi_{i_j}),(L_{i_j+1},\xi_{i_j+1}),\ldots,(L_{i_{j+1}},\xi_{i_{j+1}})).
\]
In words, each lens space in $\mathcal{L}$ is obtained by deleting the 1-handle and an unknot from Figure \ref{fig:filling}.  For each continued fraction block that $T$ meets, $\mathcal{L}$ contains the corresponding lens space, as well as the intermediate lens spaces obtained by deleting $(-2)$-framed unknots.  Our result says that every filling of $(M,\xi)$ is obtained from a filling of some $(L_i,\xi_i)\in\mathcal{L}$ by attaching a round 1-handle.  For the standard filling of $(L_i,\xi_i)$, obtained from Figure \ref{fig:filling} as described, this round 1-handle is attached along knots $K'_i,K''_i\subset(L_i,\xi_i)$ obtained from Figure \ref{fig:filling} by replacing each of the 3-balls with a cusp.  See Figure \ref{fig:lens-space-filling}.\\

If $tb(K_i)\neq -1$, there remains the ambiguity of how we distribute the stabilizations of $K_i$ among $K'_i$ and $K''_i$, but this is resolved by the mixed torus we are using.  Namely, the continued fraction block associated to $K_i$ decomposes into basic slices, each of which we think of as either positive or negative, and the stabilizations of $K_i$ correspond to these basic slices.  (We will recall the notion of a basic slice in Section \ref{sec:background}.)  The stabilizations of $K_i$ are then distributed among $K'_i$ and $K''_i$ according to the number of basic slices on either side of our mixed torus.  In particular, if the mixed torus lies at the boundary of two continued fraction blocks, then  all of the stabilizations of $K_i$ will lie on one of $K'_i$ or $K''_i$, with the other being a Legendrian unknot with Thurston-Bennequin number $-1$.  In all cases, attaching a round symplectic 1-handle along $K'_i$ and $K''_i$ corresponds to attaching a Weinstein 1-handle along $p'_i\in K'_i$ and $p''_i\in K''_i$, followed by a Weinstein 2-handle attachment along $K_i$.  The resulting filling of $(M,\xi)$ is the standard filling depicted in Figure \ref{fig:filling}.

\begin{figure}
\centering
\def\svgwidth{0.9\textwidth}
\begingroup%
  \makeatletter%
  \providecommand\color[2][]{%
    \errmessage{(Inkscape) Color is used for the text in Inkscape, but the package 'color.sty' is not loaded}%
    \renewcommand\color[2][]{}%
  }%
  \providecommand\transparent[1]{%
    \errmessage{(Inkscape) Transparency is used (non-zero) for the text in Inkscape, but the package 'transparent.sty' is not loaded}%
    \renewcommand\transparent[1]{}%
  }%
  \providecommand\rotatebox[2]{#2}%
  \newcommand*\fsize{\dimexpr\f@size pt\relax}%
  \newcommand*\lineheight[1]{\fontsize{\fsize}{#1\fsize}\selectfont}%
  \ifx\svgwidth\undefined%
    \setlength{\unitlength}{630.12110913bp}%
    \ifx\svgscale\undefined%
      \relax%
    \else%
      \setlength{\unitlength}{\unitlength * \real{\svgscale}}%
    \fi%
  \else%
    \setlength{\unitlength}{\svgwidth}%
  \fi%
  \global\let\svgwidth\undefined%
  \global\let\svgscale\undefined%
  \makeatother%
  \begin{picture}(1,0.13764468)%
    \lineheight{1}%
    \setlength\tabcolsep{0pt}%
    \put(0,0){\includegraphics[width=\unitlength,page=1]{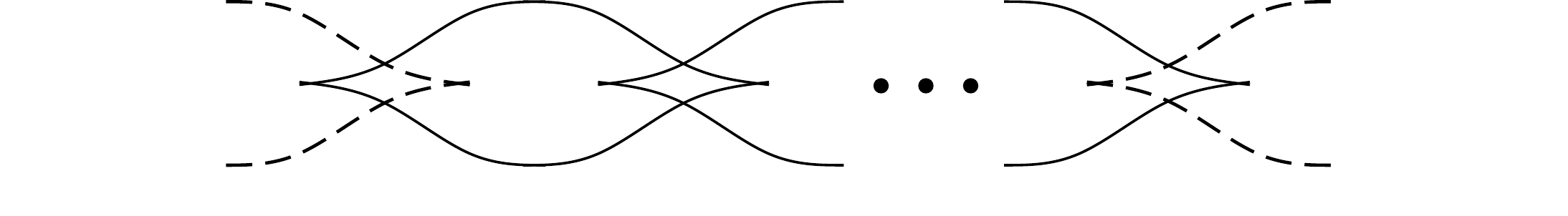}}%
    \put(0.11710542,0.00421751){\color[rgb]{0,0,0}\makebox(0,0)[lt]{\lineheight{1.25}\smash{\begin{tabular}[t]{l}$K'_i$\end{tabular}}}}%
    \put(0.29574771,0.00543414){\color[rgb]{0,0,0}\makebox(0,0)[lt]{\lineheight{1.25}\smash{\begin{tabular}[t]{l}$K_{i-1}$\end{tabular}}}}%
    \put(0.64308683,0.00425266){\color[rgb]{0,0,0}\makebox(0,0)[lt]{\lineheight{1.25}\smash{\begin{tabular}[t]{l}$K_{i+1}$\end{tabular}}}}%
    \put(0.81792017,0.0042703){\color[rgb]{0,0,0}\makebox(0,0)[lt]{\lineheight{1.25}\smash{\begin{tabular}[t]{l}$K''_i$\end{tabular}}}}%
    \put(0,0){\includegraphics[width=\unitlength,page=2]{lens-space-filling.pdf}}%
  \end{picture}%
\endgroup%

\caption{A diagram for the standard filling of $(L_i,\xi_i)$, obtained from Figure \ref{fig:filling} by deleting the 1-handle and the knot $K_i$.  Weinstein 2-handles are attached to $(B^4,\omega_{std})$ along $K_{i-1},\ldots,K_{i+1}\subset(S^3,\xi_{std})$, but not along the dashed knots $K'_i$, $K''_i$.  These are obtained from the knot $K_i$ in Figure \ref{fig:filling} by replacing the 3-balls with cusps, with the stabilizations on $K'_i$ and $K''_i$ determined by our choice of mixed torus.  Attaching a round symplectic 1-handle to this filling along $K'_i,K''_i$ yields the standard filling of $(M,\xi)$.}
\label{fig:lens-space-filling}
\end{figure}

\subsection{Distinct decompositions of fillings}
If we happen to have a classification for the fillings of the lens spaces $(L(p_1,q_1),\xi_1),\ldots,(L(p_m,q_m),\xi_m)$ produced by Theorem \ref{main-theorem}, we can then produce all of the fillings of our original hyperbolic torus bundle $(M,\xi)$ by attaching round 1-handles to these lens space fillings.  One then wonders about the extent to which this construction over-counts fillings of $(M,\xi)$.  Namely, can attaching a round 1-handle to one lens space filling yield the same filling of $(M,\xi)$ as attaching a round 1-handle to another lens space filling?\\

For the standard fillings of tight lens spaces and torus bundles, the answer to this question is yes.  From Figures \ref{fig:filling} and \ref{fig:lens-space-filling} we see that the filling of $(M,\xi)$ obtained by attaching a round 1-handle to the standard filling of $(L(p_i,q_i),\xi_i)$ is the standard filling, for $1\leq i\leq m$.

\begin{corollary}\label{cor:unique-filling}
Let $\xi$ be a virtually overtwisted tight contact structure on a hyperbolic torus bundle $M$.  If each of the tight lens spaces $(L(p_1,q_1),\xi_1),\ldots,(L(p_m,q_m),\xi_m)$ produced by Theorem \ref{main-theorem} is uniquely fillable, then so is $(M,\xi)$.
\end{corollary}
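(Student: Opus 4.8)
The plan is to run an arbitrary filling of $(M,\xi)$ through part \ref{part:hyperbolic} of Theorem \ref{main-theorem} and then quote the hypothesis. First I would fix, once and for all, the mixed torus $T\subset(M,\xi)$ used to build the list $\mathcal{L}=\bigl((L(p_1,q_1),\xi_1),\ldots,(L(p_m,q_m),\xi_m)\bigr)$. As explained in the discussion preceding the corollary, $T$ determines not only the lens spaces $(L(p_i,q_i),\xi_i)$ but also the Legendrian attaching knots $K'_i,K''_i\subset(L(p_i,q_i),\xi_i)$ and the framings along which the round symplectic $1$-handles are glued; crucially, this attaching data depends only on $(M,\xi)$ and on $T$, not on the filling being decomposed. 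Since part \ref{part:hyperbolic} already guarantees that $(M,\xi)$ is fillable, it remains only to show that any two strong (resp.\ exact) fillings of $(M,\xi)$ are equivalent --- up to symplectic deformation equivalence and blowup in the strong case, and up to the appropriate notion in the exact case.

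So let $(W,\omega)$ be any strong (resp.\ exact) filling of $(M,\xi)$. By part \ref{part:hyperbolic} there is an index $i\in\{1,\ldots,m\}$ and a strong (resp.\ exact) filling $(W_i,\omega_i)$ of $(L(p_i,q_i),\xi_i)$ from which $(W,\omega)$ is recovered by attaching a round symplectic $1$-handle along $K'_i\cup K''_i$. By hypothesis $(L(p_i,q_i),\xi_i)$ is uniquely fillable, so $(W_i,\omega_i)$ is equivalent to the standard filling $W^{\mathrm{std}}_i$ of Figure \ref{fig:lens-space-filling}. The next step is to check that attaching a round symplectic $1$-handle along a fixed Legendrian link descends to equivalence classes of fillings: symplectic handle attachment is well defined up to symplectic deformation equivalence once the contact-geometric attaching data is fixed up to isotopy, and a blowup of $(W_i,\omega_i)$ can be carried out in a ball in the interior disjoint from the region affected by the handle, so that gluing the round $1$-handle to $(W_i,\omega_i)$ yields something equivalent to gluing it to $W^{\mathrm{std}}_i$. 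Chaining these equivalences, $(W,\omega)$ is equivalent to $W^{\mathrm{std}}_i$ with a round symplectic $1$-handle attached along $K'_i\cup K''_i$; by the comparison of Figures \ref{fig:filling} and \ref{fig:lens-space-filling} recalled just before the corollary, this is precisely the standard filling of $(M,\xi)$. As $(W,\omega)$ was arbitrary --- and regardless of the index $i$ it came from --- every filling of $(M,\xi)$ is equivalent to the standard one, which is the claim.

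The main obstacle is the naturality statement in the second paragraph: one must be certain that Menke's round symplectic $1$-handle attachment is genuinely a function of the capped-off filling together with the fixed mixed-torus data --- so that equivalent lens-space fillings produce equivalent torus-bundle fillings --- and that blowups, being interior and localizable, commute with the construction. Everything else is bookkeeping with Theorem \ref{main-theorem} and the two filling diagrams, and needs no new symplectic- or contact-geometric input.
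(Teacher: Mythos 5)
Your proposal is correct and follows essentially the same route the paper has in mind: the paper presents Corollary \ref{cor:unique-filling} without a formal proof, treating it as an immediate consequence of the observation just before it (that gluing a round $1$-handle onto the standard filling of any $(L(p_i,q_i),\xi_i)$ recovers the standard filling of $(M,\xi)$) combined with part \ref{part:hyperbolic} of Theorem \ref{main-theorem}. You have simply spelled out that reasoning, and you are right to flag the naturality of round $1$-handle attachment under the relevant equivalences (deformation equivalence, blowup) as the one point that deserves a sentence of justification, even though the paper leaves it implicit.
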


For example, consider the hyperbolic torus bundle $M$ with monodromy
\[
A = T^{-k}ST^{-m}S,
\]
for some $k,m\neq 4$, and let $\xi$ be any virtually overtwisted tight contact structure on $M$ (of which there are $(k-1)(m-1)-2$).  The precise list of tight lens spaces produced by Theorem \ref{main-theorem} will depend on $\xi$, but the only diffeomorphism types which might appear on this list are $L(k,1)$ and $L(m,1)$.  Because $k,m\neq 4$, every tight contact structure on $L(k,1)$ and $L(m,1)$ is uniquely fillable (c.f. \cite{mcduff1990structure} and \cite{plamenevskaya2010planar}).  It follows that every virtually overtwisted tight contact structure on $M$ is uniquely fillable.\\

So the hyperbolic torus bundle $(M,\xi)$ can admit non-standard fillings only when at least one of the lens spaces produced by Theorem \ref{main-theorem} admits such a filling.  But there remains the possibility that non-standard fillings of distinct lens spaces can yield the same filling of $(M,\xi)$ after round 1-handle attachment.  In Section \ref{sec:overcounting} we will describe a circumstance in which distinct lens space fillings yield the same filling of $(M,\xi)$.  Describing all such circumstances --- that is, identifying all relations among lens space fillings which lead to the same torus bundle fillings --- is an interesting question for future work.

\subsection{Strategy}\label{sec:strategy}
For each torus bundle $(M,\xi)$ that we consider, our classification will follow the same basic recipe.  We begin by identifying a fiber-isotopic \emph{mixed torus} $\Sigma\subset(M,\xi)$.  A mixed torus is a type of convex torus defined by Menke in \cite{menke2018jsj} which admits a neighborhood consisting of basic slices of opposite sign.  To identify this torus, we realize $M$ as the result of identifying the boundary components of $T^2\times I$ via some monodromy $A$.  In each case, the contact structure $\xi$ on $M$ lifts to a (perhaps non-unique) tight contact structure on $T^2\times I$, which we also denote by $\xi$.  We then use the fact that $(M,\xi)$ is virtually overtwisted to find basic slices of opposite sign on either side of $T^2\times\{0\}$ (perhaps after a contact isotopy), making $T^2\times\{0\}$ a mixed torus.\\

Having identified a mixed torus in $(M,\xi)$, we suppose that $(W,\omega)$ is an exact filling of $(M,\xi)$ and turn to Menke's JSJ decomposition theorem, \cite[Theorem 1.1]{menke2018jsj}.  This theorem produces $(W',\omega')$, an exact filling of a contact manifold $(M',\xi')$, and tells us how to obtain $(W,\omega)$ from $(W',\omega')$, as well as providing a relationship between $(M,\xi)$ and $(M',\xi')$.  Specifically, because our mixed torus is the fiber $T^2\times\{0\}$, the decomposition theorem allows us to write
\[
M' = S_0\cup (T^2\times I)\cup S_1
\]
for some identifications $\partial S_i\to T^2\times\{i\}$, where $S_0$ and $S_1$ are solid tori.  That is, $M'$ is a lens space.  Moreover, the decomposition theorem tells us that we can recover $M$ from $M'$ by removing the interiors of $S_0$ and $S_1$ and identifying the dividing curves and meridians of the resulting boundary components.  At the level of fillings, this corresponds to attaching a round symplectic 1-handle to a filling of the lens space along the cores of the solid tori --- for the lens space $(L(p_i,q_i),\xi_i)$ produced by Theorem \ref{main-theorem}, these core curves are the Legendrian knots $L^i_-$ and $L^i_+$.\\

The dividing curves of $\partial S_0$ and $\partial S_1$ have slopes $s_0$ and $s_1$, respectively, determined by the contact structure on $T^2\times I$.  If the meridians of $S_0$ and $S_1$ are denoted $\mu_0, \mu_1$, then $A\mu_1=\mu_0$.  These meridians are not \emph{a priori} determined by the decomposition theorem, but by investigating different choices for the meridian $\mu_1$, we are in many cases able to either completely determine $(M',\xi')$, or able to show that no exact filling of $(M,\xi)$ exists.\\

Our primary technique for ruling out possible meridians $\mu_1$ involves an analysis of the slopes of $\mu_0$, $\mu_1$, $\Gamma_0$, and $\Gamma_1$, where $\Gamma_i$ is the dividing set of $T^2\times\{i\}$.  Because $(M',\xi')$ is fillable, $\xi'$ must be a tight contact structure.  Now consider a family of convex tori in $M'$, beginning with the boundary of a tubular neighborhood of the core of $S_1\subset M'$, passing through the fibers of $T^2\times I\subset M'$, and tending towards the boundary of a tubular neighborhood of the core of $S_0\subset M'$.  Each torus in this family has a pair of dividing curves, and the slopes of these curves will vary from the slope of $\mu_1$ to $s_1$ to $s_0$ to the slope of $\mu_0$ as we traverse the family.  Because $\xi'$ is tight, the total angle through which these dividing curves pass cannot exceed $\pi$.  We will rule out many possibilities for the meridian $\mu_1$ by showing that these choices would cause this last condition to be violated.

\subsection*{Acknowledgements}
The author is grateful to Ko Honda for suggesting this project and offering feedback throughout its completion, and would also like to thank Youlin Li and Emmy Murphy for a helpful correspondence and conversation, respectively.  Thanks also to an anonymous referee for several helpful suggestions.

\section{Background}\label{sec:background}
In this section we recall some definitions necessary for the statement of Theorem \ref{main-theorem} --- such as \emph{symplectic filling} and \emph{virtually overtwisted torus bundle} --- and offer a brief review of \cite[Theorem 1.1]{menke2018jsj}, which is the main ingredient of our proof.

\subsection{Symplectic fillings of contact manifolds}
We will say that a contact manifold $(M,\xi)$ is \emph{fillable} if it can be realized as the boundary of a symplectic manifold, with a certain level of compatibility required between the symplectic and contact structures.  More precisely, we have the following definitions.

\begin{definition}
Fix a co-oriented contact manifold $(M,\xi)$ and suppose $(W,\omega)$ is a symplectic manifold with $\partial W=M$ as oriented manifolds.  We say that $(W,\omega)$ is
\begin{itemize}
	\item a \emph{strong symplectic filling} of $(M,\xi)$ if there is a 1-form $\lambda$ on $W$ such that $\omega=d\lambda$ on some neighborhood of $\partial W$ and $\xi=\ker(\lambda|_{\partial W})$;
	\item an \emph{exact filling} of $(M,\xi)$ if there is a 1-form $\lambda$ on $W$ such that $\omega=d\lambda$ on all of $W$ and $\xi=\ker(\lambda|_{\partial W})$.
\end{itemize}
We say that $(M,\xi)$ is \emph{strongly symplectically fillable} or \emph{exactly fillable} if it admits a strong symplectic or exact fillling, respectively.
\end{definition}

\subsection{Tight contact structures on torus bundles}
The contact manifolds of central interest in this paper are \emph{torus bundles over $S^1$}.  These are smooth 3-manifolds whose tight contact structures were classified by Honda in \cite{honda2000classification2}.\\

In \cite{honda2000classification2} Honda gave a classification of the tight contact structures on $M_A$ in terms of the monodromy $A$, and each of these structures is either \emph{universally tight} or \emph{virtually overtwisted}, notions we now define.

\begin{definition}
Let $(M,\xi)$ be a tight contact 3-manifold, $\tilde{\pi}\colon\tilde{M}\to M$ the universal cover of $M$.  We will say that $\xi$ is \emph{universally tight} if the contact manifold
\[
(\tilde{M},\tilde{\xi}:=\tilde{\pi}^*\xi)
\]
is tight.  We will call $\xi$ \emph{virtually overtwisted} if there is some finite cover $\overline{\pi}\colon\overline{M}\to M$ of $M$ for which the contact manifold
\[
(\overline{M},\overline{\xi}:=\overline{\pi}^*\xi)
\]
is overtwisted.
\end{definition}

\begin{remark}
It is not immediately obvious that every tight structure on a 3-manifold must be either universally tight or virtually overtwisted.  However, every tight contact structure on a manifold with residually finite fundamental group must fall into one of these categories, and work of Hempel (\cite{hempel1987residual}) along with the geometrization conjecture shows that every 3-manifold has this property.
\end{remark}

We are now prepared to summarize Honda's classification of virtually overtwisted tight contact structures on torus bundles over $S^1$.  This classification is contained in the following table, omitting torus bundles which admit only universally tight contact structures.  We continue to use the generators $S$ and $T$ of $SL(2,\mathbb{Z})$ identified in Section \ref{sec:intro-torus-bundles}.
\begin{center}
\begin{tabular}{|c|c|c|}
\hline
{\bf Type} & {\bf Monodromy} & {\bf \# of VOT structures} \\\hline
\multirow{2}{*}{Elliptic ($|\tr~A|<2$)} & $A=S$ & 1\\
& $A=(T^{-1}S)^2$ & 2\\ \hline
\multirow{4}{*}{Parabolic ($|\tr~A|=2$)} & $A=T^2$ & 1\\
& $A=T^n$, $n>2$ & 2\\
& $A=T^n$, $n\leq -2$ & $|n-1|-2$\\
& $A=-T^n$, $n<0$ & 1\\ \hline
Hyperbolic ($|\tr~A|>2$) & $A=T^{r_0}ST^{r_1}S\cdots T^{r_k}S$ & $|(r_0+1)\cdots(r_k+1)|-2$\\
($r_0\leq -3,r_i\leq -2$) & $A=-T^{r_0}ST^{r_1}S\cdots T^{r_k}S$ & $|(r_0+1)\cdots(r_k+1)|$\\ \hline
\end{tabular}
\end{center}\vspace{0.5em}

We will not say much about Honda's techniques for establishing the above classification, but will recall some of the vocabulary used in the proof.\\

\begin{definition}
Let $(M,\xi)$ be a contact 3-manifold, and let $\Sigma\subset M$ be a surface.  We will say that $\Sigma$ is \emph{convex} if there exists a contact vector field $v$ which is transverse to $\Sigma$.  That is, if $\lambda$ is a 1-form on $M$ satisfying $\ker\lambda=\xi$, then $\ker(\mathcal{L}_v\lambda)=\xi$.  If $\Sigma$ is convex, we define the \emph{dividing set} of $\Sigma$ to be
\[
\Gamma_\Sigma = \{p\in\Sigma|v_p\in\xi_p\}.
\]
The dividing set is a multicurve which depends on $v$ only up to isotopy.
\end{definition}

Throughout this paper, whenever we have a convex torus $T^2\subset(M,\xi)$, we will assume that the dividing set $\Gamma_{T^2}$ has two parallel components.  Under the identification $T^2=\mathbb{R}^2/\mathbb{Z}^2$, these components are isotopic to a line of rational slope $s(T^2)$, which we refer to as the slope of the dividing set.  We may now define the fundamental building block for classifying tight structures on torus bundles --- the \emph{basic slice}.\\

\begin{figure}
\centering
\begin{tikzpicture}[scale=3]
\draw (0,0) circle (1);

\draw (-1,0) -- (1,0);
\draw (0:1) to[out=180,in=270]
	  (90:1) to[out=270,in=0]
	  (180:1) to[out=0,in=90]
	  (270:1) to[out=90,in=180]
	  (0:1);
\draw (0:1) to[out=180,in=225]
	  (45:1) to[out=225,in=270]
	  (90:1) to[out=270,in=315]
	  (135:1) to[out=315,in=0]
	  (180:1) to[out=0,in=45]
	  (225:1) to[out=45,in=90]
	  (270:1) to[out=90,in=135]
	  (315:1) to[out=135,in=180]
	  (0:1);
\draw (0:1) to[out=180,in=202.5]
	  (22.5:1) to[out=202.5,in=225]
	  (45:1) to[out=225,in=247.5]
	  (67.5:1) to[out=247.5,in=270]
	  (90:1) to[out=270,in=292.5]
	  (112.5:1) to[out=292.5,in=315]
	  (135:1) to[out=315,in=337.5]
	  (157.5:1) to[out=337.5,in=0]
	  (180:1) to[out=0,in=22.5]
	  (202.5:1) to[out=22.5,in=45]
	  (225:1) to[out=45,in=67.5]
	  (247.5:1) to[out=67.5,in=90]
	  (270:1) to[out=90,in=112.5]
	  (292.5:1) to[out=112.5,in=135]
	  (315:1) to[out=135,in=157.5]
	  (337.5:1) to[out=157.5,in=180]
	  (0:1);
	  
\draw[right] (1,0) node {$\frac{0}{1}$};
\draw[above] (0,1) node {$\frac{1}{1}$};
\draw[left] (-1,0) node {$\frac{1}{0}$};
\draw[below] (0,-1) node {$-\frac{1}{1}$};
\draw[above=2,right] (45:1) node {$\frac{1}{2}$};
\draw[above=2,left] (135:1) node {$\frac{2}{1}$};
\draw[below=2,right] (-45:1) node {$-\frac{1}{2}$};
\draw[below=2,left] (-135:1) node {$-\frac{2}{1}$};
\end{tikzpicture}
\caption{The Farey tessellation of the hyperbolic unit disk.  The rational numbers $p/q$ and $p'/q'$ are each connected to $(p+p')/(q+q')$ by an arc.}
\label{fig:farey}
\end{figure}
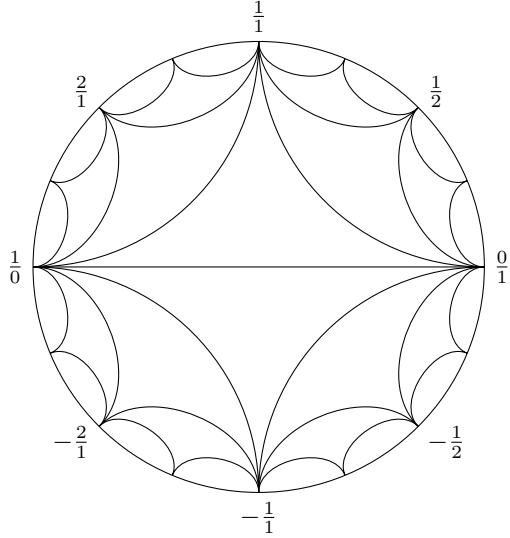

\begin{definition}
Let $(T^2\times I,\xi)$ be tight, with convex boundary, and let $s_i$ be the slope of $T^2\times\{i\}$, $i=0,1$.  We call $(T^2\times I,\xi)$ a \emph{basic slice} if
\begin{itemize}
	\item $s_0$ and $s_1$ are connected by an edge of the Farey tessellation;
	\item for each $t\in[0,1]$, if $T^2\times\{t\}$ is convex, then its slope $s$ lies in the interval $[s_1,s_0]$ on the Farey tessellation.
\end{itemize}
The Farey tessellation is depicted in Figure \ref{fig:farey}, with $[s_1,s_0]$ denoting a counterclockwise arc from $s_1$ to $s_0$.
\end{definition}

\begin{remark}
The rational numbers $s_0$ and $s_1$ will be connected by an edge of the Farey tessellation if and only if the minimal integral vectors representing $s_0,s_1$ form a basis of $\mathbb{Z}^2$.
\end{remark}

For our purposes, the most pertinent fact about basic slices is the following: once we fix rational slopes $s_0,s_1$ which are Farey neighbors, there are precisely two tight contact structures $\xi$ on $T^2\times I$ making $(T^2\times I,\xi)$ a basic slice with slopes $s_0,s_1$, and these structures are distinguished by their relative Euler classes.  Indeed, we have $PD(e(\xi,s))=\pm(0,1)\in H_1(T^2;\mathbb{Z})$.\\

Basic slices are useful in the classification of tight contact structures because one can decompose a tight solid torus $(T^2\times I,\xi)$ into basic slices, each of which admits two tight contact structures.  These basic slices naturally cluster into \emph{continued fraction blocks}, within which the basic slices may be shuffled.  The standard model for a continued fraction block is a tight structure on $T^2\times [0,m]$ with boundary slopes $s_m=-1-m$, $s_0=-1$ for some $m\geq 1$.  We assume that this tight structure decomposes into $m$ basic slices, with $s_k=-1-k$ for $0\leq k\leq m$.  There are $m+1$ tight structures satisfying these requirements, distinguished by their relative Euler classes, which satisfy $PD(e(\xi,s))=(0,k)\in H_1(T^2;\mathbb{Z})$ for some $k\in\{-m,2-m,\ldots,m-2,m\}$.\\

Of the $m+1$ tight contact structures on a continued fraction block $T^2\times[0,m]$, just two --- those with $PD(e(\xi,s))=\pm(0,m)$ --- are universally tight.  The remaining tight structures will contain adjacent basic slices of opposite sign, which causes their lifts to the universal cover to be overtwisted.  More generally, a tight contact structure $\xi$ on a thickened torus $T^2\times I$ can have each of its continued fraction blocks be universally tight, while $\xi$ itself is virtually overtwisted.  Again, this occurs when the basic slice decomposition of $(T^2\times I,\xi)$ includes adjacent basic slices of opposite sign.  See \cite[Section 4.4.5]{honda2000classification2} for details.\\

Following Honda, we will think of our contact manifolds $(M_A,\xi)$ as thickened tori $(T^2\times I,\xi)$, with the ends $T^2\times\partial I$ identified by the monodromy $A$.  When the monodromy is negative (in the sense described above), this identification will induce a change of sign, leading some universally tight contact structures on $T^2\times I$ to induce virtually overtwisted contact structures on $M_A$.  Again, see \cite{honda2000classification2} for full details.

\subsection{Mixed tori and the JSJ decomposition}\label{sec:background-jsj}
The purpose of this subsection is to state \cite[Theorem 1.1]{menke2018jsj}, which is our primary tool in proving Theorem \ref{main-theorem}.  When a contact manifold $(M,\xi)$ admits a certain type of convex torus $T^2\subset(M,\xi)$, \cite[Theorem 1.1]{menke2018jsj} allows us to cut open a filling $(W,\omega)$ of $(M,\xi)$ along $T^2$.  This yields a new filling $(W',\omega')$ of a new contact manifold $(M',\xi')$, and our strategy is to leverage an understanding of the fillings of $(M',\xi')$ into information about the fillings of $(M,\xi)$.\\

The restriction Menke places on the convex torus $T^2\subset (M,\xi)$ is that it must be \emph{mixed}.  We call $T^2$ a \emph{mixed torus} if there is a neighborhood $T^2\times[-1,1]\subset M$ of $T^2=T^2\times\{0\}$ such that
\begin{itemize}
	\item $(T^2\times[-1,0],\xi_{T^2\times[-1,0]})$ and $(T^2\times[0,1],\xi_{T^2\times[0,1]})$ are basic slices;
	\item $(T^2\times[-1,1],\xi_{T^2\times[-1,1]})$ is virtually overtwisted.
\end{itemize}
This will occur precisely when $PD(e(\xi_{T^2\times[-1,1]},s))=(0,0)\in H_1(T^2;\mathbb{Z})$.  The mixed torus terminology arises from thinking of $T^2$ as sandwiched between basic slices of opposite sign.\\

The symplectic filling $(W',\omega')$ produced by Menke's result is related to the original filling $(W,\omega)$ by \emph{round symplectic 1-handle attachment}, a notion described in \cite{avdek2012liouville} and \cite{adachi2017round}.  Briefly, a round symplectic 1-handle is described as follows.  Let $(S,\xi)$ be a standard contact torus.  With coordinates $(\theta,x,y)$ on $S=S^1\times\mathbb{D}^2$, we may write $\xi=\ker(dy-xd\theta)$.  We then let $H=[-1,1]\times S$ and set
\[
\omega_H = dt\wedge dy + d\theta\wedge dx,
\]
where $t$ is the coordinate on $[-1,1]$, and call $(H,\omega_H)$ a \emph{round symplectic 1-handle}.  We may define a vector field $V$ on $H$ so that $\mathcal{L}_V\omega_H=\omega_H$ and $V$ points transversely out of $H$ along $[-1,1]\times \partial S\subset\partial H$, while pointing transversely into $H$ along $\{\pm 1\}\times S\subset\partial H$.  This allows us to attach $(H,\omega_H)$ to a strong symplectic filling $(W,\omega)$ to obtain a new such filling.  One does this by identifying standard solid tori $S_0,S_1\subset (M,\xi)=\partial W$, which have natural identifications with $\{-1\}\times S$ and $\{1\}\times S$, respectively.  Typically the solid tori $S_0,S_1$ arise as standard neighborhoods of Legendrian knots in $(M,\xi)$.  For full details, see \cite{avdek2012liouville} or \cite{adachi2017round}.\\

We can now state Menke's result.
\begin{theorem}[{\cite[Theorem 1.1]{menke2018jsj}}]\label{thm:jsj}
Let $(M,\xi)$ be a closed, cooriented contact 3-manifold, and let $(W,\omega)$ be a strong (exact) symplectic filling of $(M,\xi)$.  If there exists a mixed torus $T^2\subset(M,\xi)$, then there exists a symplectic manifold $(W',\omega')$ such that
\begin{enumerate}
	\item $(W',\omega')$ strongly (exactly) fills its boundary $(M',\xi')$;
	\item there are standard solid tori $S_0,S_1\subset(M',\xi')$ for which
	\[
	M = (M' - \mathrm{int}(S_0) - \mathrm{int}(S_1))/\sim,
	\]
	where $\sim$ identifies the meridians and dividing sets of $\partial S_0$ and $\partial S_1$;
	\item the filling $(W,\omega)$ can be recovered from $(W',\omega')$ by attaching a round symplectic 1-handle along $S_0$ and $S_1$.
\end{enumerate}
\end{theorem}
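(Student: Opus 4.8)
The statement is Menke's; the plan below is essentially his strategy, and it is the one I would attempt. The global assertion is local in nature---everything happens in a neighborhood of the mixed torus $T^2$---so the first move is to pin down the contact germ of that neighborhood. The hypothesis $PD(e(\xi_{T^2\times[-1,1]},s)) = (0,0)$ says the two bounding basic slices have opposite sign; after shuffling bypasses and a contact isotopy I would put $T^2\times[-1,1]$ into a normal form in which the dividing slopes $s_{-1}, s_0, s_1$ of $T^2\times\{-1,0,1\}$ are Farey-controlled and a distinguished convex torus $\Sigma$ of some slope $s_\ast$ can be exhibited, together with standard contact solid tori $S_0, S_1$ abutting its two sides, having the property that filling the complement of a neighborhood of $\Sigma$ in $M$ by $S_0$ and $S_1$ yields a lens space $(M',\xi')$ whose meridians and dividing sets are exactly those required in item (2). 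Establishing this, and the induced relation between $M$ and $M'$, is convex-surface-theory bookkeeping in the Farey graph.

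The substantive step is to realize this surgery at the level of fillings, and for a general strong or exact filling there is no handle decomposition to manipulate, so I would argue with pseudoholomorphic curves. After perturbing $\xi$ near $\Sigma$ so that $\Sigma$ becomes a pre-Lagrangian torus carrying a linear Reeb foliation, I would study finite-energy $J$-holomorphic cylinders in the symplectization of the mixed-torus neighborhood asymptotic to those Reeb orbits (and to orbits on $\partial S_0$, $\partial S_1$): the mixed condition $(0,0)$ is precisely what makes such a family exist and foliate the relevant region, which I would verify via an index computation together with automatic transversality in the spirit of Wendl and a degree/continuation argument. Then I would complete $(W,\omega)$ to $(\widehat W,\widehat\omega)$, choose a cylindrical $\Sigma$-adapted almost complex structure $J$, and neck-stretch along $\Sigma$. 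SFT compactness produces holomorphic buildings; combining the foliation of the neighborhood piece with the absence of unexpected closed or low-index curves in the filling---where fillability and tightness of $(M,\xi)$ enter, through the maximum principle and Stokes' theorem in the exact case and through energy estimates on a collar in the merely strong case---one sees that the limiting configuration sweeps out an embedded separating hypersurface $Z\subset\widehat W$.

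Cutting $\widehat W$ along $Z$ then gives two pieces: one is the completion of a symplectic filling $(W',\omega')$ of $(M',\xi')$, establishing item (1), and the other---together with the collar excised in the cut---is symplectomorphic, as a Liouville cobordism, to the standard round symplectic $1$-handle $(H,\omega_H)$ with its expanding vector field. Reversing the cut recovers $(W,\omega)$ from $(W',\omega')$ by attaching $(H,\omega_H)$ along the standard solid tori $S_0$ and $S_1$, which is item (3); item (2) was already settled in the first paragraph. The main obstacle is exactly the holomorphic-curve analysis: proving that the mixed-torus neighborhood carries the predicted holomorphic foliation and, above all, that neck-stretching the filling produces no stray planes or cylinders obstructing the splitting---the non-exact (strong) case, where the maximum principle is too weak to rule out closed curves outright, being the delicate point.
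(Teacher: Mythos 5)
This theorem is imported verbatim from \cite[Theorem 1.1]{menke2018jsj}: Section \ref{sec:background-jsj} gives Menke's definitions and the statement of his result, but no proof --- it serves here as a black-box tool for Section 3.  There is therefore no proof in this paper against which to compare your sketch; the appropriate comparison is with Menke's paper itself.

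On the merits of the sketch as a free-standing proposal, the overall architecture --- normalize the contact germ near the mixed torus by convex surface theory, produce a family of finite-energy holomorphic curves anchored there, use automatic transversality and a compactness argument to show they sweep out a separating hypersurface in the completed filling, then read off the round-handle decomposition from the cut --- is plausible and in the spirit of Menke's holomorphic-curve argument, and the delicate points you flag (stray closed curves in the strong-but-non-exact case, control of breaking) are genuine.  One error of scope is worth flagging, though.  You assert that capping off the two boundary tori of $M$ cut along $T^2$ by solid tori $S_0,S_1$ produces a \emph{lens space} $(M',\xi')$.  That holds in this paper's application, where $T^2$ is a fiber of a torus bundle and its complement is $T^2\times I$, but it is not part of Menke's theorem and is false for a general closed $(M,\xi)$ carrying a mixed torus: there $M'$ is whatever $3$-manifold results from the two solid-torus fillings, with no constraint forcing it to be a lens space.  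Your argument for item (2) should be phrased without the lens-space assumption if it is to establish the theorem at Menke's level of generality rather than only the specialization invoked in this paper.
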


\section{Proof of Theorem \ref{main-theorem}}
In this section we use Theorem \ref{thm:jsj} to prove Theorem \ref{main-theorem}.  We treat the elliptic, parabolic, and hyperbolic cases separately.

\subsection{Elliptic torus bundles}
There are two elliptic torus bundles which admit a virtually overtwisted contact structure --- one with monodromy $A=S$, and the other with monodromy $A=(T^{-1}S)^2$.  In both cases we obtain a virtually overtwisted structure by starting with a minimally twisting tight structure on $T^2\times I$ with boundary slopes $s_1=0$ and $s_0=\infty$ and then passing to the torus bundle $M_A$.  There are two such structures on $T^2\times I$, and they become contact isotopic on $M_A$ when $A=S$.  When $A=(T^{-1}S)^2$, the structures remain distinct.\\

So there are three virtually overtwisted, elliptic torus bundles.  None admit a strong symplectic filling.

\begin{proposition}
Let $(M,\xi)$ be a virtually overtwisted, elliptic torus bundle.  Then $(M,\xi)$ is not strongly symplectically fillable.
\end{proposition}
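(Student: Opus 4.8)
The plan is to feed the fiber $T^2\times\{0\}$ --- which we have just arranged to be a mixed torus in $(M,\xi)$ --- into Menke's decomposition theorem and then show that the lens space it produces cannot carry a tight contact structure of the required shape. So suppose, for contradiction, that $(W,\omega)$ is a strong symplectic filling of $(M,\xi)$. By Theorem~\ref{thm:jsj} there is a strong filling $(W',\omega')$ of a contact manifold $(M',\xi')$, together with standard solid tori $S_0,S_1\subset M'$, such that $M'=S_0\cup(T^2\times I)\cup S_1$, the dividing set of $T^2\times\{i\}$ has a slope $s_i$, and the meridians $\mu_0,\mu_1$ of $\partial S_0,\partial S_1$ satisfy $A\mu_1=\mu_0$. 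Since $(W',\omega')$ is a symplectic filling, $\xi'$ is tight. In the elliptic case the contact structure on $M$ was obtained from a tight structure on $T^2\times I$ with boundary slopes $0$ and $\infty$, so $\{s_0,s_1\}=\{0,\infty\}$; set $s_1=0$ and $s_0=\infty$.

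Next I would run the slope analysis of Section~\ref{sec:strategy}. Consider the family of convex tori in $M'$ that begins at the boundary of a standard neighborhood of the core of $S_1$, sweeps through the fibers of $T^2\times I$, and ends at the boundary of a standard neighborhood of the core of $S_0$. As we traverse this family the slopes of its dividing curves move monotonically along the Farey tessellation, from $\mu_1$ through $s_1=0$ and $s_0=\infty$ to $\mu_0$, and tightness of $\xi'$ forces the total subtended angle to be at most $\pi$. But $0$ and $\infty$ are a half-turn apart, so the sub-arc from $s_1$ to $s_0$ already exhausts the whole budget; the portions contributed by $S_0$ and $S_1$ must then be degenerate, which is impossible, since a standard solid torus admits convex tori of strictly varying dividing slope and its meridian never equals the slope of its dividing set. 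Equivalently, imposing the finer requirement that $\mu_1$ lie in a short Farey arc abutting $s_1=0$ and $\mu_0$ in a short arc abutting $s_0=\infty$ on the same side, one writes $\mu_1=-a/b$ and computes, from $A$ being conjugate to $S$ (respectively $(T^{-1}S)^2$), that $\mu_0$ is forced to equal $b/a$ (respectively $b/a-1$); neither value can be pushed into the requisite arc, because $a/b+b/a\ge 2$. In every case no admissible meridian $\mu_1$ exists, so Menke's theorem cannot in fact have produced a tight $(M',\xi')$ --- a contradiction. Hence $(M,\xi)$ is not strongly fillable.

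This argument should be uniform across all three virtually overtwisted elliptic torus bundles (the single structure with monodromy $S$ and the two with monodromy $(T^{-1}S)^2$): only the conjugacy class of the monodromy and the boundary slopes $\{0,\infty\}$ enter, not the signs in the basic-slice decomposition. I expect the main obstacle to be the Farey-tessellation bookkeeping --- fixing an orientation on the circle of slopes, determining which boundary of $T^2\times I$ carries $s_0$ versus $s_1$ and in which direction the elliptic monodromy rotates, and disposing of the degenerate possibility $\mu_1=\infty$ --- together with a careful justification that tightness of $\xi'$ caps the swept angle at $\pi$ in this configuration, the key input being that $M'$ is a lens space (or $S^1\times S^2$), in which every tight thickened torus is minimally twisting.
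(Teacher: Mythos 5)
Your overall strategy matches the paper's: realize $T^2\times\{0\}$ as a mixed torus, feed it into Menke's Theorem~\ref{thm:jsj}, and run a slope analysis on the resulting lens space $(M',\xi')$. The central step of that slope analysis, however, contains a genuine error.

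You assert that ``$0$ and $\infty$ are a half-turn apart, so the sub-arc from $s_1$ to $s_0$ already exhausts the whole budget.'' This conflates two different parametrizations. On the circle at infinity of the hyperbolic disk used to draw the Farey tessellation (Figure~\ref{fig:farey}), the points $0/1$ and $1/0$ are indeed antipodal. But the rotation bound relevant to tightness is the angle through which the dividing curves turn, regarded as lines through the origin in $\mathbb{R}^2$ --- this is how Figures~\ref{fig:1} and~\ref{fig:2a} measure it --- and there a full loop is $\pi$, while the arc from slope $0$ (the horizontal line) to slope $\infty$ (the vertical line) swept out by the basic slice is only $\pi/2$. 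So the thickened-torus piece of $M'$ uses up only half the budget, the contributions from $S_0$ and $S_1$ are not forced to be degenerate, and the contradiction you want does not follow without actually engaging with the possible meridians. Your secondary ``equivalent'' formulation gestures at the right idea but does not close this gap: the admissible meridians of $S_1$ are the slopes $1/m$, $m\in\mathbb{Z}$, of vectors $(m,1)^T$ forming a $\mathbb{Z}^2$-basis with $\Gamma_1=(1,0)^T$, and the inequality $a/b+b/a\ge 2$ does not by itself place $\mu_0$ outside any relevant arc. For instance, taking $\mu_1$ of slope $-1/b$ with $b$ large gives $\mu_0$ of slope $b$, which is Farey-close to $\infty$; the actual obstruction is that $\mu_0$ approaches $\infty$ from the wrong side relative to the rotation direction, and this has to be tracked explicitly. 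The paper does exactly that: it writes $\mu_1=(m,1)^T$, computes $\mu_0=A\mu_1=(1,-m)^T$ for $A=S$ (and $(1,-m-1)^T$ for $A=(T^{-1}S)^2$), and checks (Figure~\ref{fig:1}) that the counterclockwise rotation $\mu_1\to\Gamma_1\to\Gamma_0\to\mu_0$ equals $3\pi/2$ in the first case and $3\pi/2-\theta(m)>\pi$ in the second, for \emph{every} $m\in\mathbb{Z}$. To repair your proof you would need to carry out this computation (or an equivalent case analysis), being careful about which end of each line $\Gamma_1$, $\Gamma_0$, $\mu_0$ the monotone family of dividing slopes actually passes through.
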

\begin{proof}
As stated above, there are three contact manifolds $(M,\xi)$ which satisfy the hypotheses of this proposition, and all three are obtained from a tight $(T^2\times I,\xi)$ with boundary slopes $s_1=0$, $s_0=\infty$.  We consider the case $A=S$; the case $A=(T^{-1}S)^2$ is similar.\\

Our first claim is that the image of the fiber $T^2\times\{0\}$ in $(M,\xi)$ is a mixed torus, and our proof of this fact mimics Honda's proof of the fact that $\xi$ is an overtwisted contact structure. (c.f. \cite[Section 4]{honda2000classification}).  In particular, we begin with a tight contact structure $\xi$ on $T^2\times[0,1]$ with boundary slopes $s_1=0$ and $s_0=\infty$.  Because these slopes are Farey neighbors, $(T^2\times[0,1],\xi)$ is a basic slice, and there are precisely two possibilities for $\xi$, distinguished by their relative Euler classes.  As in \cite[Section 4.7]{honda2000classification}, we compute these relative Euler classes to be
\[
PD(e(\xi,s)) = \pm(v_1-v_0) = \pm((1,0)-(0,1)) = \pm(1,-1) \in H_1(T^2;\mathbb{Z}),
\]
where $v_0$ and $v_1$ are minimal vectors in $\mathbb{Z}^2$ representing the slopes $s_0$ and $s_1$, respectively.  Now we may decompose $T^2\times[0,1]$ into a pair of basic slices
\[
(N_1:=T^2\times[0,1/2],\xi_1:=\xi|_{N_1})
\quad\text{and}\quad
(N_2:=T^2\times[1/2,1],\xi_2:=\xi|_{N_2})
\]
with boundary slope $s_{1/2}=1$, since $1$ is a Farey neighbor of both $0$ and $\infty$.  The relative Euler classes of these basic slices are then
\[
PD(e(\xi_1,s))=\pm(v_{1/2}-v_0)
\quad\text{and}\quad
PD(e(\xi_2,s))=\pm(v_1-v_{1/2}),
\]
where $v_{1/2}=(1,1)$.  Because these basic slices glue to form the basic slice $(T^2\times[0,1],\xi)$, their signs agree.\\

Now because the ends of $T^2\times[0,1]$ are identified to form $M$, we may also view $N_2$ as $T^2\times[-1/2,0]$ by applying the monodromy $A$.  Applying the monodromy alters the relative Euler class:
\[
PD(e(\xi_2,s)) = \pm A(v_1-v_{1/2}) = \pm(Av_1-Av_{1/2}) = \pm(-v_0-(v_1-v_0))=\mp v_1.
\]
Here we are using the fact that $A=S$ is a rotation by $-\pi/2$.  So the basic slice $(T^2\times[-1/2,0],\xi_{T^2\times[-1/2,0]})$ has a sign which is necessarily opposite that of the basic slice $(T^2\times[0,1/2],\xi_{T^2\times[0,1/2]})$, making the fiber $T^2\times\{0\}$ sandwiched between them a mixed torus.\\

In \cite{honda2000classification}, Honda uses this relative Euler class computation to show that $(M,\xi)$ is virtually overtwisted.  An analogous computation for the monodromy $A=(T^{-1}S)^2$ allows us to distinguish between the two virtually overtwisted tight contact structures on this torus bundle, and to see that the fiber $T^2\times\{0\}$ is a mixed torus in each of them.\\

As outlined in our strategy, we now suppose that $(M,\xi)$ is fillable, with strong symplectic filling $(W,\omega)$, using the notation from Section \ref{sec:strategy}.  Menke's decomposition theorem, applied to $(W,\omega)$ and $T^2\times\{0\}\subset(M,\xi)$, produces a strong filling $(W',\omega')$ of $(M',\xi')$, with
\[
M'\simeq S_0 \cup (T^2\times I)\cup S_1.
\]
Because $s_1=0$ and $s_0=\infty$, the dividing sets $\Gamma_i$ of $\partial S_i$ can be represented by
\[
\Gamma_0 = \left(\begin{matrix} 0\\ 1\end{matrix}\right),
\quad
\Gamma_1 = \left(\begin{matrix} 1\\ 0\end{matrix}\right).
\]
The shortest integer vector representing the meridian $\mu_1$ must form an integral basis with $\Gamma_1$ for $\mathbb{Z}^2$, and we have $\mu_0=A\mu_1$.  So we have representatives
\[
\mu_1 = \left(\begin{matrix} m\\ 1\end{matrix}\right)
\quad\text{and}\quad
\mu_0 = \left(\begin{matrix} 1\\ -m\end{matrix}\right)
\]
for some $m\in\mathbb{Z}$.\\

\begin{figure}
\centering
\begin{subfigure}{0.45\linewidth}
\centering
\begin{tikzpicture}[scale=1.75]
\draw[thin] (0,-1) -- (0,1);
\draw[thin] (-1,0) -- (1,0);

\draw[thick] (-.894,-.447) -- (.894,.447);
\node[above] at (.894,.42) {$\mu_1$};

\draw[thick, dashed] (-1,0) -- (1,0);
\node[above] at (-1,0) {$\Gamma_1$};

\draw[thick, dashed] (0,-1) -- (0,1);
\node[left] at (0,-1) {$\Gamma_0$};

\draw[thick] (-.447,.894) -- (.447,-.894);
\node[right] at (.447,-.894) {$\mu_0$};

\draw [thick,domain=26.62:296.62] plot ({0.5*cos(\x)}, {0.5*sin(\x)});
\end{tikzpicture}
\caption{$A=S$.}
\label{fig:1a}
\end{subfigure}
\begin{subfigure}{0.45\linewidth}
\centering
\begin{tikzpicture}[scale=1.75]
\draw[thin] (0,-1) -- (0,1);
\draw[thin] (-1,0) -- (1,0);

\draw[thick] (-.894,-.447) -- (.894,.447);
\node[above] at (.894,.42) {$\mu_1$};

\draw[thick, dashed] (-1,0) -- (1,0);
\node[above] at (-1,0) {$\Gamma_1$};

\draw[thick, dashed] (0,-1) -- (0,1);
\node[left] at (0,-1) {$\Gamma_0$};

\draw[thick] (-.316,.949) -- (.316,-.949);
\node[right] at (.447,-.894) {$\mu_0$};

\draw [thick,domain=26.62:288.42] plot ({0.5*cos(\x)}, {0.5*sin(\x)});
\end{tikzpicture}
\caption{$A=(T^{-1}S)^2$.}
\label{fig:1b}
\end{subfigure}
\caption{Slope analysis for virtually overtwisted, elliptic torus bundles.}
\label{fig:1}
\end{figure}
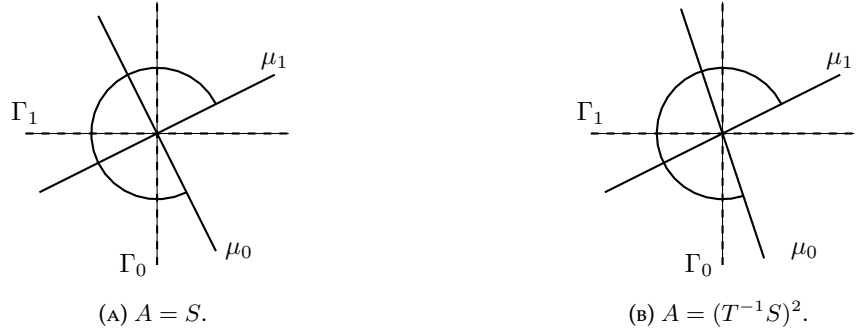

Notice that if $m\geq 0$, then a counterclockwise rotation from $\mu_1$ to $\Gamma_1$ passes through an angle of at least $\pi/2$, and a counterclockwise rotation from $\Gamma_1$ to $\Gamma_0$ takes us through an angle of precisely $\pi/2$.  The rotation from $\Gamma_0$ to $\mu_0$ is then non-trivial, and we see that our dividing curves pass through an angle in excess of $\pi$ as we move from the core of $S_1$ to $T^2\times I$, and on to the core of $S_0$.  The same rotation occurs when $m\leq 0$.  In fact, this rotation is always $3\pi/2$ --- regardless of the value of $m$ --- as can be seen in Figure \ref{fig:1a}.\\

The case $A=(T^{-1}S)^2$ is not much different.  The dividing curves will rotate through an angle of $3\pi/2-\theta(m)$, where $\theta(m)$ is the acute angle between $(1,m)^T$ and $(1,-m-1)^T$.  In particular, the dividing curves rotate through an angle greater than $\pi$.  The case $m=2$ is depicted in Figure \ref{fig:1b}.\\

In either case, we see that $(M',\xi')$ contains a thickened torus with non-minimal twisting which may be completed to a solid torus $S'$ strictly containing $S_1$.  The non-minimal twisting means that $(S',\xi|_{S'})$ is overtwisted (c.f. \cite[Section 2.3]{honda2000classification}), and thus so is $(M',\xi')$.  Note that this is the case no matter the value of $m$.  But of course this contradicts the fillability of $(M',\xi')$, so we conclude that no strong symplectic filling $(W,\omega)$ of $(M,\xi)$ exists.
\end{proof}

\subsection{Parabolic torus bundles}
Topologically, the parabolic torus bundles admitting virtually overtwisted contact structures are all circle bundles, with base either the torus or the Klein bottle.  The virtually overtwisted structures on these manifolds then fall into three families.\\

For $n\geq 2$, the monodromy $A=T^n$ produces a circle bundle over $T^2$ with Euler number $n$.  This bundle admits a unique virtually overtwisted contact structure if $n=2$, and admits two such structures if $n>2$.  These structures are exceptional in that they are the only virtually overtwisted contact structures on any torus bundles which fail to be \emph{minimally twisting}, a notion defined by Honda in \cite{honda2000classification2}.  We will show that these structures are not strongly symplectically fillable.

\begin{proposition}
Fix $n\geq 2$ and let $M_n$ be the torus bundle with monodromy $A=T^n$.  Let $\xi$ be a virtually overtwisted contact structure on $M$.  Then $(M_n,\xi)$ is not strongly symplectically fillable.
\end{proposition}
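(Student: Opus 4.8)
The plan is to follow the same strategy used for the elliptic case, adapted to the feature that these contact structures fail to be minimally twisting. First I would recall from Honda's classification that the virtually overtwisted structure $\xi$ on $M_n$ ($n \geq 2$) arises from a tight $(T^2 \times I, \xi)$ with boundary slopes that, after applying $A = T^n$, force a non-minimal twisting; in particular, $(M_n,\xi)$ being virtually overtwisted means that the fiber $T^2 \times \{0\}$ (or some fiber-isotopic convex torus) can be arranged to be a \emph{mixed torus}, just as in the elliptic case. I would make this explicit by exhibiting a neighborhood $T^2 \times [-1,1]$ of the fiber that decomposes into two basic slices of opposite sign, using a relative Euler class computation of the kind carried out in the elliptic proposition: pick an intermediate slope that is a Farey neighbor of both boundary slopes, split into basic slices, and track how $A = T^n$ (a shear, rather than a rotation) changes the sign of the relative Euler class across the gluing.

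Next, assuming for contradiction that $(M_n,\xi)$ admits a strong symplectic filling $(W,\omega)$, I would apply Theorem~\ref{thm:jsj} to $(W,\omega)$ and the mixed torus, obtaining a strong filling $(W',\omega')$ of $(M',\xi')$ with $M' \simeq S_0 \cup (T^2 \times I) \cup S_1$, where $\Gamma_0, \Gamma_1$ are determined by the boundary slopes of the thickened torus and $\mu_0 = A\mu_1 = T^n \mu_1$. I would then write down the shortest integer representatives of $\mu_1$ (constrained to form an integral basis with $\Gamma_1$), compute $\mu_0 = T^n \mu_1$, and carry out the slope analysis on the Farey tessellation: traverse the family of convex tori from the core of $S_1$, through the fibers of $T^2 \times I$, to the core of $S_0$, and show that the total angle swept by the dividing slopes exceeds $\pi$ for \emph{every} admissible choice of $\mu_1$. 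The arithmetic here should be cleaner than the elliptic case because $n \geq 2$ makes the shear push $\mu_0$ far from $\Gamma_0$, and the non-minimal twisting already present in $(T^2\times I,\xi)$ only adds to the total rotation. From the excess rotation I would conclude, exactly as in the elliptic proof, that $(M',\xi')$ contains a thickened torus with non-minimal twisting completing to an overtwisted solid torus, hence $\xi'$ is overtwisted, contradicting the fillability of $(M',\xi')$.

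The main obstacle I anticipate is the first step: verifying that a mixed torus genuinely exists in $(M_n,\xi)$ despite the failure of minimal twisting. In the minimally twisting cases one has Honda's clean basic-slice picture to lean on, but here one must be more careful about which fiber to choose and how the non-minimal twisting interacts with the sign bookkeeping for the relative Euler class; it is conceivable that the mixed torus is not literally the fiber $T^2 \times \{0\}$ but a nearby convex torus, and one has to check that Menke's hypotheses still apply. Once the mixed torus is in hand, the slope analysis and the overtwisted-completion argument should go through essentially as written in the elliptic proposition, with $T^n$ replacing the rotation $S$. A secondary point to handle carefully is the case $n = 2$ versus $n > 2$: the number of virtually overtwisted structures differs, so I would make sure the relative Euler class computation is stated uniformly enough to cover both, or else treat them in parallel with a brief remark that the argument is insensitive to which of the (one or two) structures we picked.
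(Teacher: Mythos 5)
Your overall strategy (mixed torus $+$ Menke decomposition $+$ Farey slope analysis) matches the paper's, and the second half of your plan --- the slope analysis ruling out all candidate meridians --- does go through essentially as you describe, with the rotation exceeding $\pi$ for every $m \in \mathbb{Z}$.

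However, there is a genuine gap in the first step, and you have (correctly) put your finger on it yourself. Your proposed mechanism for producing the mixed torus is to ``track how $A = T^n$ changes the sign of the relative Euler class across the gluing,'' imitating the elliptic case where $A = S$ is a rotation. But a shear $T^n$ does not flip the sign of a basic slice the way $S$ does: if a basic slice has $PD(e(\xi,s)) = \pm(v_b - v_a)$ and you apply $A$ to both the Euler class and the boundary slopes, you get $\pm(Av_b - Av_a)$, the \emph{same} sign relative to the transported slope vectors. So the fiber $T^2 \times \{0\}$ is not made mixed by the gluing, and the elliptic-style Euler class computation does not carry over. The paper instead takes the mixed torus to be the \emph{interior} torus $T^2 \times \{1/2\}$ of slope $\infty$, which exists precisely because the non-minimal twisting forces the dividing set to pass through all slopes. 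The two halves $T^2 \times [0,1/2]$ and $T^2 \times [1/2,1]$ are then basic slices (boundary slopes $0$ and $\infty$), and the sign difference between them is \emph{intrinsic} to $\xi$ on $T^2 \times I$: since $(T^2 \times I, \xi)$ is not universally tight, the two basic slices forming it must have opposite signs. No monodromy bookkeeping is needed for this step at all --- the non-minimal twisting, which you flagged as ``the main obstacle,'' is in fact the helpful feature that both produces the slope-$\infty$ torus and guarantees its mixedness. After that, the paper's split of $M_n$ at $T^2 \times \{1/2\}$ gives a thickened torus with boundary slopes $s_{1/2} = \infty$ and $s_{-1/2} = 1/n$ (the latter coming from applying $A$), and the slope analysis proceeds exactly as you sketch.

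So the gap is: you would need to replace the ``apply $A$ and watch the Euler class sign flip'' step with the direct observation that non-minimal twisting makes the two halves of $T^2 \times I$ basic slices of opposite sign. Without that replacement, the argument does not establish the existence of a mixed torus, and the rest of the proof has nothing to feed into Menke's theorem.
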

\begin{proof}
Per the classification in \cite{honda2000classification}, there are two tight contact structures on $T^2\times I$ with boundary slopes $s_0=s_1=0$, and these pass to tight contact structures on $M_n$.  In \cite{honda2000classification2}, Honda shows that these structures are distinct when $n>2$, and for $n\geq 2$ give all virtually overtwisted contact structures on $M_n$.  Let $\xi$ be one of these structures on $T^2\times I$, passing to $\xi$ (via notational abuse) on $M$.\\

Because $(T^2\times I,\xi)$ is not minimally twisting, the dividing curves rotate through an angle of $\pi$ as we move from $T^2\times\{0\}$ to $T^2\times\{1\}$, and thus all slopes are achieved by some boundary-parallel torus.  We take $T^2\times\{1/2\}$ with slope $s_{1/2}=\infty$.  Because $\infty$ is connected to $0$ by an edge of the Farey tessellation, each of $T^2\times[0,1/2]$ and $T^2\times[1/2,1]$ is a basic slice, and the fact that their union $T^2\times I$ is not universally tight makes $T^2\times\{1/2\}$ a mixed torus.\\

Now suppose that $(M_n,\xi)$ admits a strong symplectic filling $(W,\omega)$, and let $(M',\xi')$ be the strongly symplectically fillable manifold that results from applying Menke's decomposition theorem to $(W,\omega)$, using the mixed torus $T^2\times\{1/2\}$.  In order to split $(M_n,\xi)$ open along $T^2\times\{1/2\}$, we think of this torus bundle as the result of identifying the ends of $(T^2\times[-1/2,1/2],\xi)$ via the monodromy.  Then, as above, $M'$ results from gluing solid tori $S_{-1/2}$ and $S_{1/2}$ to the boundary components of $T^2\times[-1/2,1/2]$, and we are left to determine the meridians $\mu_{-1/2},\mu_{1/2}$.  Notice that the dividing curves $\Gamma_{-1/2}$ and $\Gamma_{1/2}$ have slopes $s_{-1/2}=1/n$ and $s_{1/2}=\infty$, respectively.  Because the shortest integer vectors representing $\Gamma_{1/2}$ and $\mu_{1/2}$ must form an integer basis for $\mathbb{R}^2/\mathbb{Z}^2$, we may represent $\mu_{1/2}$ by the vector $(1,m)^T$, for some $m\in\mathbb{Z}$.\\

Suppose we have $m=0$, so that $\mu_{1/2}=(1,0)^T$.  Then $\mu_{-1/2}=A(1,0)^T=(1,0)$, so the counterclockwise rotation from $\mu_{1/2}$ to $\Gamma_{1/2}$ to $\Gamma_{-1/2}$ to $\mu_{-1/2}$ takes us through an angle of $2\pi$, and we find that $(M',\xi')$ is overtwisted.  So $m\neq 0$.\\

\begin{figure}
\centering
\begin{subfigure}{0.45\linewidth}
\centering
\begin{tikzpicture}[scale=2]
\draw[thin] (0,-1) -- (0,1);
\draw[thin] (-1,0) -- (1,0);

\draw[thick] (.707,.707) -- (-.707,-.707);
\node at (.8,.8) {$\mu_{1/2}$};

\draw[thick, dashed] (0,-1) -- (0,1);
\node[right] at (0,1) {$\Gamma_{1/2}$};

\draw[thick, dashed] (-.894,-.447) -- (.894,.447);
\node[below] at (-.9,-.4) {$\Gamma_{-1/2}$};

\draw[thick] (-.928,-.371) -- (.928,.371);
\node[below] at (1,.25) {$\mu_{-1/2}$};

\draw [thick,domain=45:360] plot ({0.5*cos(\x)}, {0.5*sin(\x)});
\draw [thick,domain=0:21.87] plot ({0.5*cos(\x)}, {0.5*sin(\x)});
\end{tikzpicture}
\caption{$m\geq 1$.}
\end{subfigure}
\begin{subfigure}{0.45\linewidth}
\centering
\begin{tikzpicture}[scale=2]
\draw[thin] (0,-1) -- (0,1);
\draw[thin] (-1,0) -- (1,0);

\draw[thick] (-.707,.707) -- (.707,-.707);
\node at (-.8,.8) {$\mu_{1/2}$};

\draw[thick, dashed] (0,-1) -- (0,1);
\node[right] at (0,-.8) {$\Gamma_{1/2}$};

\draw[thick, dashed] (-.894,-.447) -- (.894,.447);
\node[below] at (.95,.35) {$\Gamma_{-1/2}$};

\draw[thick] (-.707,-.707) -- (.707,.707);
\node[above] at (.8,.7) {$\mu_{-1/2}$};

\draw [thick,domain=135:360] plot ({0.5*cos(\x)}, {0.5*sin(\x)});
\draw [thick,domain=0:45] plot ({0.5*cos(\x)}, {0.5*sin(\x)});
\end{tikzpicture}
\caption{$m\leq -1$.}
\end{subfigure}
\caption{Slope analysis for a virtually overtwisted torus bundle with monodromy $T^n$, $n\geq 2$.}
\label{fig:2a}
\end{figure}
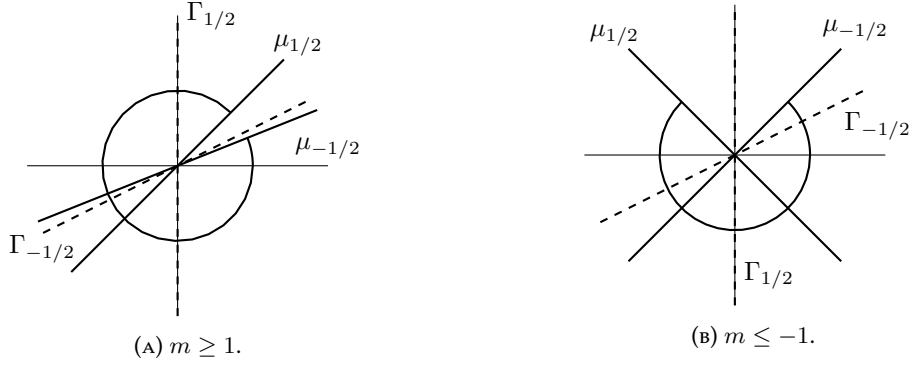

Next, suppose $m\geq 1$, meaning that $\mu_{-1/2}$ is represented by $A(1,m)^T=(1+mn,m)$.  Because $n\geq 2$, we have
\[
0 < \frac{m}{1+mn} < \frac{1}{n} < m < \infty,
\]
which is to say that
\[
0 < \slope(\mu_{-1/2}) < \slope(\Gamma_{-1/2}) < \slope(\mu_{1/2}) < \slope(\Gamma_{1/2}).
\]
In this case, the dividing curves rotate through an angle in excess of $3\pi/2$, and again $(M',\xi')$ is overtwisted.  So $m<0$.\\

But if $m<0$ then we find that
\[
\slope(\mu_{1/2}) < 0 < \slope(\Gamma_{-1/2}) < \slope(\mu_{-1/2}) < \slope(\Gamma_{1/2}),
\]
since $m<0<1/n<m/(1+mn)$.  In this case, counterclockwise rotation from $\mu_{1/2}$ to $\Gamma_{1/2}$ to $\Gamma_{-1/2}$ to $\mu_{-1/2}$ takes us through an angle of $3\pi/2$, for any choice of $m<0$ and $n\geq 2$, and again we find that $(M',\xi')$ is overtwisted.  Because there are no suitable choices for the meridian $\mu_{1/2}$, we must conclude that $(M_n,\xi)$ does not admit a strong filling $(W,\omega)$.  The overtwistedness when $m\neq 0$ is seen in Figure \ref{fig:2a}.
\end{proof}

When $n\leq -2$, the monodromy $A=T^n$ will again give us a circle bundle over $T^2$ with Euler number $n$, but in this case we have $|n-1|-2$ virtually overtwisted structures.  These bundles admit a unique strong filling, up to symplectic deformation equivalence and blowup, for any virtually overtwisted structure.  As a corollary, the exact fillings of these bundles are unique up to symplectomorphism.

\begin{proposition}
Fix $n\leq -2$ and let $M_n$ be the torus bundle with monodromy $A=T^n$.  Let $\xi$ be a virtually overtwisted contact structure on $M_n$.  Then $(M_n,\xi)$ admits a unique strong filling, up to symplectic deformation equivalence and blowup.
\end{proposition}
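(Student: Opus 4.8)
The plan is to follow the same strategy used in the preceding propositions, but now the slope analysis will \emph{permit} a choice of meridian rather than rule every choice out, and the resulting lens space will turn out to be one of the uniquely fillable spaces $L(p,1)$. First I would exhibit the mixed torus: the two tight contact structures on $T^2\times I$ with boundary slopes $s_0=s_1=0$ that pass to the virtually overtwisted structures on $M_n$ are minimally twisting (unlike the $n\geq 2$ case), so to produce a mixed torus I cannot simply pick an arbitrary intermediate slope. Instead I would use that $\xi$ is virtually overtwisted to locate adjacent basic slices of opposite sign: decompose $(T^2\times I,\xi)$ into basic slices along slopes realizing a path in the Farey graph from $0$ to $0$, and since the structure is virtually overtwisted some adjacent pair has opposite signs; the torus between them is mixed. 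Relabel so that this mixed torus is $T^2\times\{0\}$, and cut $M_n$ open along it, viewing $M_n$ as $(T^2\times[-1/2,1/2],\xi)$ with ends glued by $A=T^n$.

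Next I would apply Theorem~\ref{thm:jsj}: a strong filling $(W,\omega)$ of $(M_n,\xi)$ yields a strong filling $(W',\omega')$ of $(M',\xi')$, where $M' = S_{-1/2}\cup(T^2\times[-1/2,1/2])\cup S_{1/2}$ is a lens space. The dividing-set slopes $s_{-1/2},s_{1/2}$ are determined by $\xi$, and the meridian $\mu_{1/2}$ is a vector forming an integral basis with $\Gamma_{1/2}$, with $\mu_{-1/2}=A\mu_{1/2}$. I would then run the angle-of-rotation argument from the previous two proofs: tightness of $\xi'$ forces the dividing curves to rotate through at most $\pi$ as we pass from the core of $S_{1/2}$ through $T^2\times I$ to the core of $S_{-1/2}$. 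For $n\leq -2$ this constraint — unlike the $n\geq 2$ case — is satisfiable, and I expect it to pin down $\mu_{1/2}$ uniquely (up to the ambiguity that does not affect the diffeomorphism type), so that $(M',\xi')$ is forced to be a specific tight contact structure on a lens space of type $L(|n-1|,1)$ or similar. Here the continued-fraction arithmetic $[r_k,\ldots]$ recalled in the introduction identifies the diffeomorphism type.

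Having identified $(M',\xi')$ as a tight contact structure on $L(p,1)$ with $p\neq 4$ (one should check $|n-1|\neq 4$, i.e. $n\neq -3$, or handle $n=-3$ by noting the relevant structure is still uniquely fillable, or that the standard tight structure on $L(4,1)$ admitting two fillings is not the one that arises — this needs care), I would invoke McDuff's classification \cite{mcduff1990structure} together with \cite{plamenevskaya2010planar}: every tight contact structure on $L(p,1)$, $p\neq 4$, is uniquely strongly fillable up to symplectic deformation equivalence and blowup. Since $(W,\omega)$ is recovered from $(W',\omega')$ by attaching a round symplectic $1$-handle along the standard solid tori $S_{\pm 1/2}$ — and the attaching data (the Legendrian cores, with their stabilizations distributed according to the mixed torus) is determined by $(M',\xi')$ and the decomposition — uniqueness of $(W',\omega')$ propagates to uniqueness of $(W,\omega)$. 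The exact-filling corollary follows because in the exact setting blowup is not available, so symplectic deformation equivalence upgrades to symplectomorphism (using that exact fillings of $L(p,1)$ are symplectomorphic, per \cite{plamenevskaya2010planar}).

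The main obstacle I anticipate is the slope analysis: showing that for $n\leq -2$ there is \emph{exactly one} admissible value of the meridian parameter $m$ (rather than several, or none), and correctly reading off which tight contact structure on which lens space results — in particular verifying that one lands among the uniquely fillable $L(p,1)$'s and does not accidentally hit the exceptional structure on $L(4,1)$. A secondary subtlety is making precise the claim that the round-$1$-handle attaching data is rigid enough that uniqueness downstairs forces uniqueness upstairs; this should follow from the explicit description of the cores $K'_i,K''_i$ and the fact that their stabilization pattern is dictated by the (fiber-isotopic) mixed torus, but it deserves an explicit remark.
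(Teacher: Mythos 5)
Your overall strategy --- locate a mixed torus, apply Menke's decomposition, run the slope/rotation-angle analysis to pin down the meridian, then invoke uniqueness of the lens space filling --- is exactly the paper's, and the final step (uniqueness of $(W',\omega')$ propagating to $(W,\omega)$ via round $1$-handle attachment) is also correct. However, your setup has a genuine gap: you have carried over the $n\geq 2$ data to the $n\leq -2$ case. For $n\geq 2$ one does use the two \emph{non-minimally-twisting} structures on $T^2\times I$ with $s_0=s_1=0$; but for $n\leq -2$ there are $|n-1|-2$ virtually overtwisted structures on $M_n$, and they arise from minimally twisting tight structures on $T^2\times I$ with boundary slopes $s_1=-1$ and $s_0=-1/(1-n)$, decomposed as a continued fraction block of $-n$ basic slices. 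A minimally twisting structure with $s_0=s_1=0$ is $I$-invariant and has no nontrivial basic slice decomposition at all, so ``a path in the Farey graph from $0$ to $0$'' does not produce the mixed torus you need. Instead one shuffles the basic slices of the continued fraction block (using that $\xi$ is virtually overtwisted) so that the first and last slices have opposite sign, making the image of $T^2\times\{0\}$ a mixed torus.

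The second, larger issue is your predicted output of the slope analysis. Setting $\mu_1=(m,k)^T$ with $k=\pm1-m$, the three inequalities in the paper rule out $m=0$, $k=-1-m$, and $m\geq 2$, leaving only $\mu_1=(1,0)^T$. Since $T^n$ fixes $(1,0)^T$, we get $\mu_0=(1,0)^T$ as well, so $M'\simeq S^2\times S^1$ --- \emph{not} an $L(p,1)$. Your worry about accidentally landing on the exceptional structure on $L(4,1)$ therefore never arises, and neither does the need to invoke McDuff or Plamenevskaya--Van Horn-Morris: $(S^2\times S^1,\xi_{std})$ has a unique tight contact structure and a unique strong filling up to symplectic deformation equivalence and blowup, which is all one needs. (Your extra remarks about the exact case are not required for this proposition as stated.)
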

\begin{proof}
Consider a tight contact structure $\xi'$ on $T^2\times I$ with boundary slopes $s_0=\frac{-1}{1-n}$ and $s_1=-1$.  Honda showed in \cite{honda2000classification} that there are $1-n$ such structures, distinguished by the number $k$ of positive basic slices they have in a continued fraction block.  Concretely, each such structure $\xi'$ admits a decomposition into $-n$ basic slices
\begin{equation}\label{eq:1-b-basic-slice-decomposition}
\left(T^2\times\left[0,\frac{1}{n}\right]\right)\cup
\left(T^2\times\left[\frac{1}{n},\frac{2}{n}\right]\right)\cup
\cdots\cup
\left(T^2\times\left[\frac{n-1}{n},1\right]\right),
\end{equation}
and we can identify $\xi'$ by counting the number of these basic slices which are positive.  These structures remain tight and distinct when we pass to $M_n$, and if $k$ is neither 0 nor $1-n$, the resulting structure is virtually overtwisted.  If $k=0$ or $k=1-n$, the basic slices above all have the same sign, and the structure on $M_n$ is universally tight.\\

Let $\xi$ be a virtually overtwisted structure on $M_n$, induced by a structure $\xi'$ on $T^2\times I$.  Because $\xi$ is virtually overtwisted, the basic slices in the decomposition (\ref{eq:1-b-basic-slice-decomposition}) do not all have the same sign.  In particular, we may shuffle the basic slices so that that the first and last basic slices have opposite sign, ensuring that the image of $T^2\times\{0\}$ in $(M_n,\xi)$ is a mixed torus.\\

Now suppose that $(W,\omega)$ is an exact filling of $(M_n,\xi)$ and let $(W',\omega')$ be the symplectic manifold with boundary produced by splitting $(W,\omega)$ open along our mixed torus.  This manifold exactly fills its boundary $(M',\xi')$, and we may write
\[
M' = S_0 \cup (T^2\times I) \cup S_1,
\]
where the boundary slopes of $T^2\times I$ are $s_0=\frac{-1}{1-n}$ and $s_1=-1$.  Denote by $\Gamma_i$ and $\mu_i$ the dividing curves and meridian, respectively, of $\partial S_i$, for $i=0,1$.  Then $\Gamma_0$ and $\Gamma_1$ are represented by the vectors $(n-1,1)^T$ and $(1,-1)^T$, respectively.  Because the shortest integer vectors representing $\mu_1$ and $\Gamma_1$ must form an integral basis for $\mathbb{Z}^2$, we may represent $\mu_1$ by $(m,k)^T$, with $m\geq 0$ and $k=\pm 1-m$.  It follows that $\mu_0$ is represented by
\[
A\left(\begin{matrix} m\\ \pm 1-m \end{matrix}\right)
=\left(\begin{matrix}
\pm n - m(n-1)\\ \pm 1-m
\end{matrix}\right).
\]
We now begin ruling out candidate values for $m$.\\

First, $m$ must be positive.  Indeed, if $m=0$ then the slopes of $\mu_1$ and $\mu_0$ are given by $\infty$ and $1/n$, respectively.  It follows that as we move a convex torus from the core of $S_1$ to $T^2\times\{1\}$, then to $T^2\times\{0\}$ and on to the core of $S_0$, the dividing curve slopes will range from $\infty$ to $-1$ to $\frac{-1}{1-n}$ and on to $1/n$, rotating through an angle greater than $\pi$, as indicated in Figure \ref{fig:2c-m-0}.  This would mean that $(M',\xi')$ is overtwisted, violating its fillability.\\

Next, we must have $k=1-m$.  To this end, notice that for $n\leq -2$ and $m\geq 1$ we have
\[
\dfrac{-1-m}{m} < -1 < \dfrac{-1-m}{-n-m(n-1)} < \dfrac{-1}{1-n}.
\]
This tells us that if $k=-1-m$, then
\[
\slope(\mu_1) < \slope(\Gamma_1) < \slope(\mu_0) < \slope(\Gamma_0),
\]
and again the dividing curves pass through too great an angle for $(M',\xi')$ to be tight.  This rotation is seen in Figure \ref{fig:2c-k-minus}.  So $k=1-m$ and $\mu_1$ is represented by a vector of the form $(m,1-m)^T$, with $m\geq 1$.\\

Finally we show that $m=1$.  If $m\geq 2$ then we have
\[
-1 < \dfrac{1-m}{m} < \dfrac{-1}{1-n} < \dfrac{1-m}{n-m(n-1)},
\]
so
\[
\slope(\Gamma_1) < \slope(\mu_1) < \slope(\Gamma_0) < \slope(\mu_0),
\]
and yet again the counterclockwise rotation from $\mu_1$ to $\Gamma_1$ to $\Gamma_0$ to $\mu_0$ takes us through an angle in excess of $\pi$.  This rotation can be seen in Figure \ref{fig:2c-geq-2}.\\

So the meridians are given by $\mu_1=(1,0)^T$ and $\mu_0=(1,0)^T$, meaning that $M'\simeq S^2\times S^1$.  This determines $\xi'$, since $S^2\times S^1$ has a unique tight contact structure $\xi_0$, and in fact determines $(W',\omega')$, since $(S^2\times S^1,\xi_0)$ has a unique strong filling up to symplectic deformation equivalence and blowup.  Because we have a recipe for reconstructing $(W,\omega)$ from $(W',\omega')$ --- namely, by attaching a round symplectic 1-handle to $(W',\omega')$ along $S_0$ and $S_1$ --- we see that this filling is uniquely determined by $(M_n,\xi)$.
\end{proof}

\begin{figure}
\centering
\begin{subfigure}{0.32\linewidth}
\centering
\begin{tikzpicture}[scale=1.75]
\draw[thin] (0,-1) -- (0,1);
\draw[thin] (-1,0) -- (1,0);

\draw[thick] (0,-1) -- (0,1);
\node[right] at (0,1) {$\mu_1$};

\draw[thick, dashed] (-.707,.707) -- (.707,-.707);
\node[above] at (-.55,.61) {$\Gamma_1$};

\draw[thick, dashed] (.949,-.316) -- (-.949,.316);
\node[below] at (-.9,.3) {$\Gamma_0$};

\draw[thick] (.894,-.447) -- (-.894,.447);
\node[below] at (.9,-.42) {$\mu_0$};

\draw [thick,domain=90:333.38] plot ({0.5*cos(\x)}, {0.5*sin(\x)});
\end{tikzpicture}
\caption{Rotation in the case $m=0$.}
\label{fig:2c-m-0}
\end{subfigure}
\begin{subfigure}{0.32\linewidth}
\centering
\begin{tikzpicture}[scale=1.75]
\draw[thin] (0,-1) -- (0,1);
\draw[thin] (-1,0) -- (1,0);

\draw[thick] (.447,-.894) -- (-.447,.894);
\node[above] at (-.447,.894) {$\mu_1$};

\draw[thick, dashed] (-.707,.707) -- (.707,-.707);
\node[above] at (-.6,.6) {$\Gamma_1$};

\draw[thick, dashed] (.949,-.316) -- (-.949,.316);
\node[below] at (-.9,.3) {$\Gamma_0$};

\draw[thick] (-.928,.371) -- (.928,-.371);
\node[below] at (.928,-.371) {$\mu_0$};

\draw [thick,domain=116.6:338.13] plot ({0.5*cos(\x)}, {0.5*sin(\x)});
\end{tikzpicture}
\caption{The case $k=-1-m$.}
\label{fig:2c-k-minus}
\end{subfigure}
\begin{subfigure}{0.32\linewidth}
\centering
\begin{tikzpicture}[scale=1.75]
\draw[thin] (0,-1) -- (0,1);
\draw[thin] (-1,0) -- (1,0);

\draw[thick] (.84,-.42) -- (-.894,.447);
\node[above] at (-.894,.42) {$\mu_1$};

\draw[thick, dashed] (-.707,.707) -- (.707,-.707);
\node[below] at (.45,-.45) {$\Gamma_1$};

\draw[thick, dashed] (.949,-.316) -- (-.949,.316);
\node[below] at (1.05,-.28) {$\Gamma_0$};

\draw[thick] (-.970,.243) -- (.970,-.243);
\node[above] at (.970,-.243) {$\mu_0$};

\draw [thick,domain=153.4:345.93] plot ({0.5*cos(\x)}, {0.5*sin(\x)});
\end{tikzpicture}
\caption{The case $m\geq 2$ and $k=1-m$.}
\label{fig:2c-geq-2}
\end{subfigure}
\caption{Slope analysis for the monodromy $A=-T^n$, $n\leq -1$ or $A=T^n,n\leq -2$.}
\end{figure}
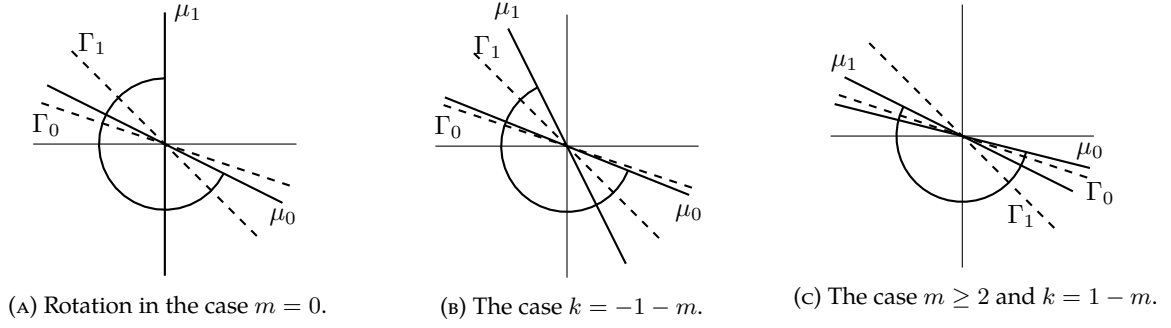

Finally, for $n\leq -1$ we consider the monodromy $A=-T^n$, which produces a non-orientable circle bundle over the Klein bottle.  This bundle admits a unique virtually overtwisted contact structure, and in \cite{ding2018strong}, Ding and Li constructed a Stein filling for this structure.  We show that Ding-Li's filling is the unique exact filling of this torus bundle, up to symplectomorphism.  Indeed, theirs is the only strong filling, up to symplectic deformation equivalence and blowup.

\begin{proposition}
Fix $n\leq -1$ and let $(M_n,\xi)$ be the virtually overtwisted torus bundle with monodromy $A=-T^n$.  Then $(M_n,\xi)$ admits a unique exact filling, up to symplectic deformation equivalence and blowup.
\end{proposition}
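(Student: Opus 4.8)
The plan is to follow the recipe of the previous two propositions almost verbatim, substituting the negative monodromy $A=-T^{n}$ for $T^{n}$ and carrying along the sign reversal that negativity of the monodromy forces. First I would invoke Honda's classification \cite{honda2000classification2} to present the unique virtually overtwisted structure $\xi$ on $M_{n}$ as a tight contact structure $\xi'$ on $T^{2}\times I$ whose two ends are identified by $-T^{n}$; taking boundary slopes $s_{1}=-1$ and $s_{0}=\frac{-1}{1-n}$ (the same normalization as for $A=T^{n}$, which is legitimate because $-T^{n}$ acts on slopes exactly as $T^{n}$) realizes $\xi'$ as a continued fraction block. The one genuine departure from the $A=T^{n}$ argument is the mechanism producing the mixed torus: \emph{because the monodromy is negative}, the gluing reverses the sign of the basic slice decomposition, so a relative Euler class computation of exactly the type Honda carries out --- and mirrored in the elliptic case above --- shows that the two basic slices adjacent to the image of the fiber have opposite sign, i.e. $PD(e)=(0,0)$ there. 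Hence that fiber is a mixed torus in $(M_{n},\xi)$, and since there is only one virtually overtwisted structure on $M_{n}$ I am free to use whichever basic-slice arrangement of $\xi'$ makes this cleanest.

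With the mixed torus in hand I would feed an exact filling $(W,\omega)$ of $(M_{n},\xi)$ into Theorem \ref{thm:jsj}, producing an exact filling $(W',\omega')$ of $(M',\xi')$ with
\[
M' = S_{0}\cup (T^{2}\times I)\cup S_{1},
\]
where $T^{2}\times I$ has boundary slopes $s_{0}=\frac{-1}{1-n}$, $s_{1}=-1$ (so that $\Gamma_{0},\Gamma_{1}$ are determined) and the meridians satisfy $\mu_{0}=-T^{n}\mu_{1}$. Writing $\mu_{1}=(m,k)^{T}$ with $\{\mu_{1},\Gamma_{1}\}$ an integral basis, I would then rerun the slope analysis of the previous proposition. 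The crucial observation is that $-T^{n}v=-(T^{n}v)$ represents the same unoriented curve, and hence the same slope, as $T^{n}v$; so the slopes of $\mu_{1},\Gamma_{1},\Gamma_{0},\mu_{0}$ --- and therefore the cyclic orderings shown in Figures \ref{fig:2c-m-0}--\ref{fig:2c-geq-2} --- are identical to those in the $A=T^{n}$ case. Every candidate meridian is thus eliminated for the same reason as before: it would force the dividing curves of a convex-torus family sweeping from the core of $S_{1}$ across $T^{2}\times I$ to the core of $S_{0}$ to rotate through more than $\pi$, contradicting tightness of $\xi'$. This leaves only $\mu_{1}=(1,0)^{T}$, whence $\mu_{0}=-T^{n}(1,0)^{T}=(-1,0)^{T}$.

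These two meridians sit at Farey distance zero, so $M'\cong S^{1}\times S^{2}$; this forces $\xi'$ to be the unique tight contact structure $\xi_{0}$ on $S^{1}\times S^{2}$, which admits a unique strong filling up to symplectic deformation equivalence and blowup (and a unique exact filling, $S^{1}\times B^{3}$, up to symplectomorphism). Recovering $(W,\omega)$ from $(W',\omega')$ by the round symplectic $1$-handle attachment of Theorem \ref{thm:jsj} then shows the strong filling of $(M_{n},\xi)$ is unique up to symplectic deformation equivalence and blowup and its exact filling is unique up to symplectomorphism; comparing with \cite{ding2018strong} identifies the latter with Ding--Li's Stein filling. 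I expect the main obstacle to be the first step: fixing the correct $T^{2}\times I$ model for $-T^{n}$ and checking, through the relative Euler class, that the sign reversal at the gluing genuinely yields $PD(e)=(0,0)$ at the fiber --- after that, the slope bookkeeping is a line-by-line transcription of the preceding proof and the endgame is identical. A small additional point is the case $n=-1$, where $\xi'$ is a single basic slice so that the mixed-torus neighborhood must wrap once around the bundle; one should also confirm that, however the signs are distributed among the basic slices of $\xi'$, the glued-up structure is the same virtually overtwisted structure on $M_{n}$.
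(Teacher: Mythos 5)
Your proposal is correct and follows the same approach as the paper's proof: build a mixed torus at the image of $T^2\times\{0\}$ using the sign reversal induced by the negative monodromy, run Menke's decomposition, rerun the $T^n$ slope analysis (since $-T^n v$ and $T^n v$ have the same slope), and conclude $M'\simeq S^1\times S^2$ with its unique tight structure and unique filling. For the $n=-1$ edge case you rightly flag, the paper's resolution is to further subdivide the single basic slice $T^2\times I$ (boundary slopes $-1$, $-1/2$) at an intermediate convex torus of slope $-2/3$ into two same-sign basic slices, so that after the monodromy's sign flip the fiber $T^2\times\{0\}$ sits between basic slices of opposite sign.
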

\begin{proof}
As mentioned, Ding and Li construct a Stein filling of $(M_n,\xi)$ in \cite{ding2018strong}.  We use the mixed torus approach to show that no other exact fillings exist.  Honda showed in \cite{honda2000classification2} that the $1-n$ distinct tight contact structures on $T^2\times I$ with boundary slopes $s_0=\frac{-1}{1-n}$ and $s_1=-1$ all descend to $\xi$ on $M_n$.  Each of these structures on $T^2\times I$ is divided into $-n$ basic slices
\[
\left(T^2\times\left[0,\frac{1}{n}\right]\right)\cup
\left(T^2\times\left[\frac{1}{n},\frac{2}{n}\right]\right)\cup
\cdots\cup
\left(T^2\times\left[\frac{n-1}{n},1\right]\right),
\]
and the contact structure on $T^2\times I$ is determined by the number of positive basic slices in this decomposition.  If $n\leq -2$, then there are $-n\geq 2$ basic slices, and we can choose a structure on $T^2\times I$ for which the basic slices $T^2\times[0,\frac{1}{n}]$ and $T^2\times[\frac{n-1}{n},1]$ are positive.  The change of sign induced by $A$ will then cause the image of $T^2\times\{0\}$ in $(M_n,\xi)$ to sit between basic slices of opposite sign --- that is, $T^2\times\{0\}$ will be a mixed torus.\\

In case $n=-1$, either of the contact structures on $T^2\times I$ with $s_1=-1$ and $s_0=-1/(1-n)=-1/2$ is a basic slice.  There is some $t_0\in I$ so that the convex torus $T^2\times\{t_0\}$ has slope $s_{t_0}=-2/3$, so we may further divide $T^2\times I$ into a pair of basic slices
\[
T^2\times I = \left(T^2\times[0,t_0]\right) \cup \left(T^2\times[t_0,1]\right),
\]
each with the same sign.  As before, the change of sign produced by the monodromy allows us to realize $T^2\times\{0\}$ as a mixed torus, sitting between the basic slices $T^2\times[0,t_0]$ and $T^2\times[t_0-1,0]$ of opposite sign.  In any case, $T^2\times\{0\}$ is a mixed torus.\\

Once again we suppose that $(W,\omega)$ is an exact filling of $(M_n,\xi)$ and let $(M',\xi')$ be the exactly fillable contact manifold produced by Menke's decomposition theorem.  We write
\[
M' = S_0 \cup T^2\times I \cup S_1
\]
and let $\Gamma_{i},\mu_i$ denote the dividing curves and meridian of $\partial S_i$, for $i=0,1$.  Because $s_1=-1$, $\Gamma_0$ is represented by $(1,-1)^T\in T^2=\mathbb{R}^2/\mathbb{Z}^2$.  The shortest integer vectors representing $\Gamma_1$ and $\mu_1$ form an integral basis for $\mathbb{Z}^2$, so $\mu_1=(m,k)^T$, where $k=\pm 1-m$ and $m\geq 0$.  It follows that $\mu_0$ is represented by
\[
A\left(\begin{matrix} m\\ \pm 1-m\end{matrix}\right)
=-\left(\begin{matrix}
\pm n-m(n-1)\\ \pm 1-m
\end{matrix}\right).
\]
Our first claim is that $m\geq 1$.  If $m=0$, then the meridians $\mu_1$ and $\mu_0$ are represented by $(0,\pm 1)^T$ and $(\pm n,\pm 1)$, respectively.  Thus the counterclockwise rotation from $\mu_1$ to $\Gamma_1$ to $\Gamma_0$ to $\mu_0$ takes us from a slope of $\infty$ to a slope of $-1$ to a slope of $\frac{-1}{1-n}$, and then on to a slope of $1/n$.  In particular, the dividing curves of tori in $(M',\xi')$ rotate through an angle greater than $\pi$, and $(M',\xi')$ is overtwisted.  The case $m=0$ is depicted in Figure \ref{fig:2c-m-0}.\\

Next we claim that $k=1-m$.  Notice that for $m\geq 1$ and $n\leq -1$,
\[
\dfrac{-1-m}{m} < -1 < \dfrac{1+m}{n+m(n-1)} < \dfrac{-1}{1-n} < 0.
\]
So if $k=-1-m$, then
\[
\slope(\mu_{1}) < \slope(\Gamma_{1}) < \slope(\mu_{0}) < \slope(\Gamma_{0}) < 0.
\]
As before, this tells us that the contact planes of $(M',\xi')$ will rotate through an angle in excess of $\pi$, making $(M',\xi')$ overtwisted.  The case $k=-1-m$ is depicted in Figure \ref{fig:2c-k-minus}.\\

Finally, we claim that $m=1$.  If $m\geq 2$ and $n\leq -1$, then
\[
-1 < \dfrac{1-m}{m} \leq \dfrac{-1}{1-n} < \dfrac{1-m}{n-m(n-1)} < 0,
\]
which is to say
\[
\slope(\Gamma_{1}) < \slope(\mu_{1}) \leq \slope(\Gamma_{0}) < \slope(\mu_{0}) < 0,
\]
since $k=1-m$.  Once again, this causes $(M',\xi')$ to be overtwisted.  See Figure \ref{fig:2c-geq-2}.\\
	
At last we see that $m=1$ and $k=0$, so that $\mu_{1}$ is represented by $(1,0)^T$.  Then $\mu_{0}$ is represented by $A(1,0)^T=(-1,0)^T$, and $M'\simeq S^2\times S^1$.  But $S^2\times S^1$ admits a unique tight contact structure, and this structure has a unique strong symplectic filling up to symplectic deformation and blowup.  That is, the output $(W',\omega')$ of Menke's decomposition process is independent of the filling $(W,\omega)$ of $(M_n,\xi)$ with which we start.  Because the decomposition recovers $(W,\omega)$ from $(W',\omega')$, the filling $(W,\omega)$ is unique.
\end{proof}

\subsection{Hyperbolic torus bundles}\label{sec:hyperbolic}
Hyperbolic torus bundles represent the generic case for torus bundles, where the monodromy $A$ has $|\tr(A)|>2$.  The monodromy has the form
\[
A = \pm T^{r_0}ST^{r_1}S\cdots T^{r_k}S,
\]
where $r_0\leq -3$ and $r_i\leq -2$ for $1\leq i\leq k$ and $S, T$ are the generators of $SL(2,\mathbb{Z})$ identified in Section \ref{sec:background}.  Honda showed in \cite{honda2000classification2} that the torus bundle with this monodromy admits $|(r_0+1)\cdots(r_k+1)|$ minimally twisting tight contact structures.  If the monodromy is positive, then two of these structures are universally tight; otherwise they are all virtually overtwisted.\\

We will find it convenient to change our monodromy by a conjugation, and to relabel our coefficients.  Given $a\leq -3$ and $\tau\geq 0$, set
\[
C_{a,\tau} = T^{a+1}S(T^{-2}S)^\tau T^{-1}.
\]
We determine the monodromy $A$ by choosing integers $a_0,\ldots,a_k\leq -3$ and $\tau_0,\ldots,\tau_k\geq 0$ and setting
\[
A = \pm C_{a_0,\tau_0}\cdots C_{a_k,\tau_k} = \pm T^{a_0+1}S(T^{-2}S)^{\tau_0} \cdots T^{a_k}S(T^{-2}S)^{\tau_k} T^{-1}.
\]
Then $T^{-1}AT$ has the form identified above.  Notice that with this notation the count of tight contact structures is $|(a_0+1)\cdots(a_k+1)|$.  The primary benefit to this new notation is that we may easily identify a basic slice decomposition of $(M,\xi)$.  Namely, our decomposition will have $|a_0+\cdots+a_k+2(k+1)|$ basic slices, divided into $k+1$ continued fraction blocks.\\

With this notation established, we may state more explicitly our result for hyperbolic torus bundles.

\begin{proposition}\label{hyperbolic-result}
Let $M$ be a hyperbolic torus bundle and let $\xi$ be a virtually overtwisted tight contact structure on $M$.  Then there is a nonempty, finite list $(L_1,\xi_1),\ldots,(L_m,\xi_m)$ of tight lens spaces and a corresponding list $L^1_\pm,\ldots,L^m_\pm$ of Legendrian knots $L^i_\pm\subset(L_i,\xi_i)$ such that every strong (exact) symplectic filling of $(M,\xi)$ can be obtained from a strong (exact) symplectic filling of $(L_i,\xi_i)$, for some $1\leq i\leq m$, by attaching a round symplectic 1-handle along $L^i_\pm$.  Moreover,
\begin{enumerate}
	\item if $(M,\xi)$ has a virtually overtwisted continued fraction block, we have $m=1$;\label{vot-cfb}
	\item if the monodromy of $M$ has coefficients $a_0,a_1,\ldots,a_k$ and $\tau_0,\ldots,\tau_k$, then we have $m\leq 2+\max\{\tau_i\}$.\label{hyperbolic-ub}
\end{enumerate}
\end{proposition}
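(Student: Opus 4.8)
The plan is to follow the recipe of Section~\ref{sec:strategy}, running parallel to the elliptic and parabolic arguments; the new feature is that the slope analysis will now leave a positive but controlled number of admissible meridians rather than zero or one. First I would locate the mixed torus: realize $M$ as $T^2\times I$ with its ends glued by $A=\pm C_{a_0,\tau_0}\cdots C_{a_k,\tau_k}$, and lift $\xi$ to a minimally twisting tight contact structure on $T^2\times I$, which by Honda's classification \cite{honda2000classification2} carries a basic slice decomposition organized into $k+1$ continued fraction blocks, consecutive blocks joined through chains of $(-2)$-slices recording the $T^{-2}S$ factors. Because $\xi$ is virtually overtwisted, the sequence of signs of its basic slices --- computed via relative Euler classes as in the elliptic case, and after accounting for the sign change produced by a negative monodromy --- is incompatible with universal tightness, so after shuffling basic slices within blocks and cyclically reindexing we will be in one of two cases: either some continued fraction block contains two adjacent basic slices of opposite sign (exactly the hypothesis of~\ref{vot-cfb}), and we place $T^2\times\{0\}$ between them; or two consecutive blocks carry net signs that, possibly after the monodromy's sign change, force two adjacent basic slices of opposite sign across their common boundary, and we place $T^2\times\{0\}$ there. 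In either case $T^2\times\{0\}$ is a mixed torus, by the analogue of Honda's argument that $\xi$ is virtually overtwisted.

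Next, given a strong (exact) filling $(W,\omega)$ of $(M,\xi)$, I would apply Theorem~\ref{thm:jsj} with $T^2\times\{0\}$ to obtain a strong (exact) filling $(W',\omega')$ of $(M',\xi')$, where
\[
M'=S_0\cup(T^2\times I)\cup S_1,
\]
the inner $T^2\times I$ carrying $\xi$ cut open along the mixed torus, with boundary slopes $s_1$ (the slope of the mixed torus) and $s_0$ (its image under the monodromy), with $\Gamma_i$ of slope $s_i$, and with $A\mu_1=\mu_0$. Since $\xi'$ is forced to be tight, $(M',\xi')$ is a tight lens space; it is one of the structures classified by Giroux and Honda (\cite{giroux2000structures},\cite{honda2000classification}), hence admits exact fillings, and by construction $(W,\omega)$ is recovered from $(W',\omega')$ by a round symplectic $1$-handle attachment along $S_0$, $S_1$. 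So the proposition will reduce to enumerating the admissible choices of $\mu_1$ (equivalently, of $(M',\xi')$); for each admissible $\mu_1$ I would set $(L_i,\xi_i)=(M',\xi')$ and take $L^i_\pm$ to be Legendrian realizations of the cores of $S_0$, $S_1$.

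The slope count proceeds as in the elliptic and parabolic cases. Since $\{\mu_1,\Gamma_1\}$ must be an integral basis of $\mathbb{Z}^2$, the meridian $\mu_1$ is a Farey neighbor of $s_1$, and only those on the side of $s_1$ toward $s_0$ are relevant. In the tight manifold $(M',\xi')$ a family of convex tori running from the boundary of a neighborhood of the core of $S_1$ through the fibers of the inner $T^2\times I$ to the boundary of a neighborhood of the core of $S_0$ has dividing slopes sweeping out $\mu_1\to s_1\to\cdots\to s_0\to\mu_0$, and tightness forbids a total rotation exceeding $\pi$. The rotation from $s_1$ to $s_0$ through the inner piece is a fixed angle $\Theta<\pi$ fixed by $\xi$ and the chosen mixed torus, so $\mu_1$ is admissible precisely when the added rotations $\mu_1\to s_1$ and $s_0\to A\mu_1$ together do not exceed $\pi-\Theta$. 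A Farey-tessellation computation of these angles should show that the admissible $\mu_1$ form an initial segment of the Farey neighbors of $s_1$ heading toward $s_0$: when the mixed torus lies interior to a continued fraction block there is room for a single admissible $\mu_1$, so $m=1$, giving~\ref{vot-cfb}; when it straddles the common boundary of two consecutive blocks with $\tau$ intervening $T^{-2}S$ factors, there are at most $\tau+2$ admissible $\mu_1$ --- corresponding to the lens spaces obtained from Figure~\ref{fig:filling} by deleting the $1$-handle together with one of the $\tau$ intervening $(-2)$-framed unknots or one of the two bounding unknots --- so $m\le\tau+2\le 2+\max_i\tau_i$, giving~\ref{hyperbolic-ub}. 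Nonemptiness of the list is immediate, since the meridian yielding the standard filling of Figures~\ref{fig:filling}--\ref{fig:lens-space-filling} (equivalently, the Stein fillings of \cite{bhupal2014canonical} and \cite{golla2015stein}) is always admissible, and Theorem~\ref{thm:jsj} then delivers the round $1$-handle statement.

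The hard part is twofold. The first step, locating the mixed torus, requires showing that in \emph{every} virtually overtwisted case a fiber can be isotoped to a mixed torus of the stated form, which demands a careful sign analysis across all $k+1$ continued fraction blocks and through the monodromy's sign change, generalizing Honda's overtwistedness argument; this is mostly bookkeeping. More seriously, the slope count must be made precise: one must compute $\Theta$ and the rotations contributed by each candidate pair $\mu_1$, $\mu_0=A\mu_1$ sharply enough to bound the admissible set by the predicted length, and to identify the resulting diffeomorphism types as the $L(p,q)$ with $-p/q=[r_k,\ldots,r_1]$ or $[r_{j-1},\ldots,r_0,r_k,\ldots,r_{j+1}]$ of the discussion following Theorem~\ref{main-theorem}. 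The $m=1$ case should be comparatively rigid and fall out cleanly; the general bound $m\le 2+\max\tau_i$ is where the real computation lies.
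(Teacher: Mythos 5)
Your roadmap matches the paper's: locate a mixed fiber torus via the basic slice decomposition into $k+1$ continued fraction blocks, apply Menke's Theorem~\ref{thm:jsj} to produce a filling of a lens space $M'=S_0\cup(T^2\times I)\cup S_1$, and then bound the admissible meridians $\mu_1$ by a Farey-tessellation rotation argument. The parts (\ref{vot-cfb}) and (\ref{hyperbolic-ub}) are also correctly matched to the two positions a mixed torus can occupy (interior to a block vs.\ at the interface of two blocks), and the nonemptiness via the standard Bhupal--Ozbagci/Golla--Lisca filling is the right observation. So the strategy is the paper's.

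That said, you leave unproven exactly the two lemmas that carry the argument, and your sketch of how to fill them in drifts from what actually works. The paper does not phrase the constraint as an angle budget $\pi-\Theta$ to be apportioned between $\mu_1\to s_1$ and $s_0\to\mu_0$; instead it introduces the stable and unstable slopes $\Lambda^s,\Lambda^u$ of the hyperbolic monodromy's eigenvectors and proves (Lemma~\ref{lemma:hyperbolic-meridian-arc}) that $s(\mu)$ must lie in the arc $[\Lambda^s,\Lambda^u]$ \emph{complementary} to the fiber slopes, using the contracting/expanding dynamics of $A$ near its fixed slopes: if $s(\mu)$ lands on the fiber side, then $s(A\mu)$ is forced to lie between $s(\mu)$ and $s(T^2)$, so the path overlaps itself. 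This makes finiteness immediate (Farey neighbors of $s(T^2)$ accumulate only at $s(T^2)\notin[\Lambda^s,\Lambda^u]$), whereas your $\Theta$-bookkeeping would still need a separate argument that only finitely many neighbors fit the budget. Your phrase ``Farey neighbors of $s_1$ heading toward $s_0$'' is, at best, ambiguous and, read literally, wrong: the admissible meridians lie on the side of $s_1$ \emph{away} from the fiber slopes (in the example $A=C_{-5,2}$ they are the nonnegative integers $0,1,2,3$, while the fibers go through the negatives toward $s_0$). Finally, the quantitative claims $m=1$ and $m\le\tau+2$ rest on Lemma~\ref{lemma:cfb}, whose proof is a concrete estimate showing $1+\tau<\Lambda^u<2+\tau$ and $\tfrac{1}{a+2}<\Lambda^s<0$, so that exactly $\tau+2$ integers land in $[\Lambda^s,\Lambda^u]$ and exactly one slope ($0$) in that arc is connected to any of the interior-block slopes $-1,-1/2,\ldots,1/(a+3)$. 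You flag this computation as ``the hard part'' but do not supply it; without it, neither bound in the proposition is established.
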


\begin{remark}
The lists $(L_1,\xi_1),\ldots,(L_m,\xi_m)$ and $L^1_\pm,\ldots,L^m_\pm$ are determined by the choice of a mixed torus in $(M,\xi)$.  We will construct this list for each mixed torus in $(M,\xi)$, and prove the last two statements of the proposition by showing that $(M,\xi)$ admits a mixed torus leading to a list of the desired length.
\end{remark}

As in the previous cases, our strategy is to identify a mixed torus in $(M,\xi)$, and then to determine which lens spaces may result from cutting $(M,\xi)$ open along this torus and gluing on solid tori.  We then use Theorem \ref{thm:jsj} to conclude that all fillings of $(M,\xi)$ result from fillings of these lens spaces via round symplectic 1-handle attachment.  Notice that Theorem \ref{thm:jsj} specifies the attaching regions for these lens spaces.  We will follow Honda \cite{honda2000classification2} in identifying $M$ with a quotient of $T^2\times I$, where $T^2\times\{1\}$ has slope $s_1=\infty$.  The basic slice decomposition of $T^2\times I$ will then have $k+1$ continued fraction blocks, with a block corresponding to each coefficient $a_i$.  The continued fraction block associated to $a_i$ will consist of $|a_i+2|$ basic slices, and will have $|a_i+1|$ tight contact structures, distinguished by the number of basic slices in the block which have positive Euler characteristic.\\

Because there are $|a_i+1|$ tight contact structures on the continued fraction block associated with $a_i$, we see that there are $|(a_0+1)\cdots(a_k+1)|$ tight contact structures on $T^2\times I$ with the desired boundary slopes.  Honda shows in \cite{honda2000classification2} that these pass to distinct tight contact structures on $M$.  Exactly two of the structures on $T^2\times I$ are universally tight --- those two for which every basic slice has the same sign.  If our monodromy is positive, these two structures remain universally tight when we pass to $M$; if $A$ is negative, these structures become virtually overtwisted.\\

Our basic slice decomposition of $M$ shows us that there are $\ell=|a_0+\cdots+a_k+2(k+1)|$ tori which appear along the boundary of a basic slice.  If $(M,\xi)$ is virtually overtwisted, then at least one of these $\ell$ tori sits between basic slices of opposite sign, and is thus a mixed torus.  We choose such a torus $T^2\times\{t_0\}$ and call it $T^2$.\\

Now $A$ has an oriented eigenbasis $\{v_1,v_2\}$ with associated eigenvalues $\lambda_1>1$ and $0<\lambda_2<1$.  These vectors are necessarily irrational, and we denote their slopes by
\[
\Lambda^s := \slope(v_1)
\quad\text{and}\quad
\Lambda^u := \slope(v_2).
\]
These are the \emph{stable} and \emph{unstable slopes}, respectively.  On the Farey tessellation, the slopes corresponding to the dividing sets of fibers of our $T^2\times I$ are located in the counterclockwise arc connecting $\Lambda^u$ to $\Lambda^s$.  In particular, $\slope(T^2)$ and $\slope(AT^2)$ are in this sector.\\

The slopes $\slope(T^2)$ and $\slope(AT^2)$ will play the roles played by $s_1$ and $s_0$, respectively, in previous cases.  Namely, the lens space $(M',\xi')$ that results from Menke's JSJ decomposition will have the form
\[
S_{t_0} \cup (T^2\times[t_0,t_0+1]) \cup S_{t_0+1},
\]
with boundary slopes $s_{t_0+1}=\slope(T^2)$ and $s_{t_0}=\slope(AT^2)$.  We now identify the possible slopes for the meridian $\mu$ of $S_{t_0+1}$.  Certainly $s(\mu)$ must be connected to $s_{t_0+1}$ by an edge on the Farey tessellation, since the shortest integral vectors representing these slopes form an integral basis for $\mathbb{Z}^2$.  Our next claim is that $s(\mu)$ must not lie in the same sector of the Farey tessellation as $T^2\times I$.

\begin{lemma}\label{lemma:hyperbolic-meridian-arc}
On the Farey tessellation, $s(\mu)$ lies in the counterclockwise arc connecting $\Lambda^s$ to $\Lambda^u$.
\end{lemma}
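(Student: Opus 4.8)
The plan is to argue by contradiction, exactly in the spirit of the elliptic and parabolic cases. Since $(M',\xi')$ bounds the filling $(W',\omega')$ furnished by Theorem~\ref{thm:jsj}, the structure $\xi'$ is tight; I will show that if $\slope(\mu)$ lies in the counterclockwise arc $J:=[\Lambda^u,\Lambda^s]$ --- the same sector of the Farey tessellation in which all fiber slopes of $T^2\times I$ sit --- then $(M',\xi')$ is overtwisted, a contradiction. The obstruction is again a count of the total angle swept by the dividing curves of a family of convex tori in $M'=S_{t_0}\cup(T^2\times[t_0,t_0+1])\cup S_{t_0+1}$ running from the core of $S_{t_0+1}$, out to $T^2\times\{t_0+1\}$, monotonically through the fibers to $T^2\times\{t_0\}$, and into the core of $S_{t_0}$. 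Along this family the dividing slopes pass through $\slope(\mu)$, then $s_{t_0+1}=\slope(T^2)$, then through the fiber slopes to $s_{t_0}=\slope(AT^2)$, and finally to $\slope(A\mu)$, using the relation $A\mu=\mu_{t_0}$ supplied by Theorem~\ref{thm:jsj}. If this total exceeds $\pi$, one finds a thickened torus of non-minimal twisting completing to a solid torus strictly containing $S_{t_0+1}$, which is overtwisted (c.f.\ \cite[Section 2.3]{honda2000classification}).

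The new ingredient, absent from the earlier cases, is the hyperbolic dynamics of $A$ on the circle of slopes. First I would record that $A$ acts there as an orientation-preserving homeomorphism whose only fixed points are $\Lambda^s$ (attracting, as $\lambda_1>1$) and $\Lambda^u$ (repelling, as $0<\lambda_2<1$); hence $A$ carries each of the two complementary arcs to itself, and on $J=[\Lambda^u,\Lambda^s]$ it moves every interior point strictly counterclockwise, toward $\Lambda^s$. Two consequences are relevant: $\slope(AT^2)$ lies strictly counterclockwise of $\slope(T^2)$ inside $J$, so the intermediate sweep above is genuinely counterclockwise and contributes a strictly positive rotation; and, under the assumption $\slope(\mu)\in J$, the slope $\slope(A\mu)$ also lies in $J$, strictly counterclockwise of $\slope(\mu)$ (here we use that a rational slope cannot equal the irrational $\Lambda^u$ or $\Lambda^s$, so $\slope(\mu)$ is interior to $J$).

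With these facts in hand the computation is short and uniform. I would write the three contributions to the total rotation as the counterclockwise arclengths from $\slope(\mu)$ to $s_{t_0+1}$, from $s_{t_0+1}$ through the fibers to $s_{t_0}$, and from $s_{t_0}$ to $\slope(A\mu)$, and split into the two cases according to whether $\slope(\mu)$ is counterclockwise before or after $s_{t_0+1}=\slope(T^2)$; because $A$ is orientation-preserving, $\slope(A\mu)$ lies on the corresponding side of $s_{t_0}=A\cdot s_{t_0+1}$. In each case exactly one of the first and third arclengths acquires a wrap-around term of $\pi$, and one checks that the three arclengths add up to $\pi$ plus the strictly positive counterclockwise arclength inside $J$ from $\slope(\mu)$ to $\slope(A\mu)$. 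So the total rotation strictly exceeds $\pi$, producing the overtwisted disk; hence $\slope(\mu)\notin J$, and since $\slope(\mu)$ is rational it equals neither endpoint, so $\slope(\mu)$ lies in the complementary arc --- the counterclockwise arc from $\Lambda^s$ to $\Lambda^u$, as claimed.

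The step I expect to be the main obstacle is the orientation bookkeeping: verifying that the family of convex tori rotates counterclockwise \emph{consistently} all the way from the core of $S_{t_0+1}$, through the minimally twisting mapping-torus region --- where one must also see that the sweep cannot wind the long way around, precisely because every fiber slope lies in $J$ --- and into $S_{t_0}$, and then aligning this convention with the slope data $s_{t_0+1}=\slope(T^2)$, $s_{t_0}=\slope(AT^2)$, $\mu_{t_0}=A\mu$ delivered by Theorem~\ref{thm:jsj}. Once the conventions are fixed, the cancellation that yields the clean identity (total rotation)$\,=\pi+\delta$ with $\delta>0$ is forced by $A$ being orientation-preserving, and the contradiction follows.
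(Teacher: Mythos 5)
Your proof is correct and matches the paper's approach: a contradiction argument assuming $s(\mu)\in[\Lambda^u,\Lambda^s]$, using the hyperbolic dynamics of $A$ on the circle of slopes (with $\Lambda^s$ attracting and $\Lambda^u$ repelling) and a case split on the position of $s(\mu)$ relative to $s(T^2)$ to show the dividing-curve rotation exceeds $\pi$. The paper phrases the excess rotation as the counterclockwise path from $s(\mu)$ to $s(T^2)$ to $s(AT^2)$ to $s(A\mu)$ overlapping itself, while you compute it explicitly as $\pi$ plus the arc from $s(\mu)$ to $s(A\mu)$; these are the same observation.
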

\begin{proof}
As in the previous cases, rotating from $s(\mu)$ to $s_{t_0+1}$ to $s_{t_0}$ to $s(A\mu)$ must not take us through an angle in excess of $\pi$, lest $(M',\xi')$ be overtwisted.  On the Farey tessellation, this means that the counterclockwise path connecting these slopes (in this order) must not overlap itself.  Now if $s(\mu)$ lies on the counterclockwise arc $[s(T^2),\Lambda^s]$ between $s(T^2)$ and $\Lambda^s$, then $s(A\mu)$ lies on $[s(\mu),\Lambda^s]$.  But then the arc $[s(\mu),s(T^2)]$ contains $s(A\mu)$, meaning that the rotation described above is through an angle greater than $\pi$, and $(M',\xi')$ is overtwisted.  On the other hand, if $s(\mu)$ lies on $[\Lambda^u,s(T^2)]$, then $s(A\mu)$ is contained in $[s(\mu),s(AT^2)]$.  Again this means that our path of slopes overlaps itself, and $(M',\xi')$ is overtwisted.  We conclude that $s(\mu)$ lies on $[\Lambda^s,\Lambda^u]$.
\end{proof}

With Lemma \ref{lemma:hyperbolic-meridian-arc} in hand, we quickly obtain the finite list $(L_1,\xi_1),\ldots,(L_m,\xi_m)$ guaranteed by Proposition \ref{hyperbolic-result}.  Indeed, suppose we have a sequence $(s(\mu_i))$ of meridian slopes connected to $s(T^2)$ on the Farey tessellation (as all candidate meridian slopes must be).  Then this sequence converges to $s(T^2)$, and thus only finitely many of the slopes are contained in $[\Lambda^s,\Lambda^u]$, since $s(T^2)$ is not contained in this interval.  So there are only finitely many possible meridians $\mu$ for $S_{t_0+1}$, and hence only finitely many lens spaces to which $M'$ could be diffeomorphic.  Notice that the contact structure $\xi'$ on $M'$ is determined by $\xi$, so we now know that we have a finite list of tight lens spaces.  Moreover, each of these lens spaces has a pair $L^i_\pm\subset(L_i,\xi_i)$ of distinguished Legendrian knots, along which we attach symplectic 1-handles to produce fillings of $(M,\xi)$.  Each of these is constructed along with its lens space as the core curves of $S_{t_0}$ and $S_{t_0+1}$, respectively.  We will obtain parts (\ref{vot-cfb}) and (\ref{hyperbolic-ub}) of Proposition \ref{hyperbolic-result} by examining the slopes which appear in a continued fraction block.

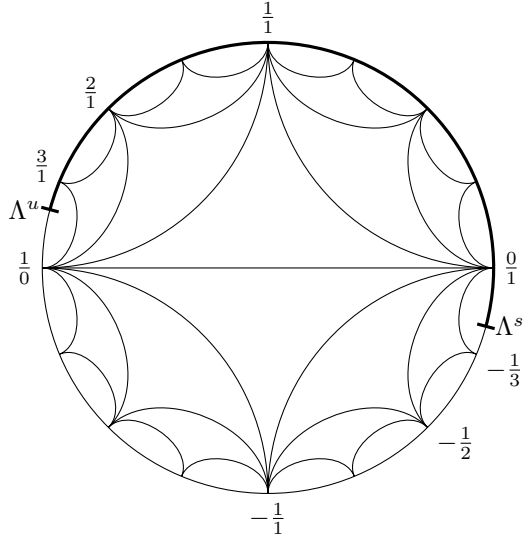
\begin{figure}
\centering
\begin{tikzpicture}[scale=3]
\draw (0,0) circle (1);

\draw (-1,0) -- (1,0);
\draw (0:1) to[out=180,in=270]
	  (90:1) to[out=270,in=0]
	  (180:1) to[out=0,in=90]
	  (270:1) to[out=90,in=180]
	  (0:1);
\draw (0:1) to[out=180,in=225]
	  (45:1) to[out=225,in=270]
	  (90:1) to[out=270,in=315]
	  (135:1) to[out=315,in=0]
	  (180:1) to[out=0,in=45]
	  (225:1) to[out=45,in=90]
	  (270:1) to[out=90,in=135]
	  (315:1) to[out=135,in=180]
	  (0:1);
\draw (0:1) to[out=180,in=202.5]
	  (22.5:1) to[out=202.5,in=225]
	  (45:1) to[out=225,in=247.5]
	  (67.5:1) to[out=247.5,in=270]
	  (90:1) to[out=270,in=292.5]
	  (112.5:1) to[out=292.5,in=315]
	  (135:1) to[out=315,in=337.5]
	  (157.5:1) to[out=337.5,in=0]
	  (180:1) to[out=0,in=22.5]
	  (202.5:1) to[out=22.5,in=45]
	  (225:1) to[out=45,in=67.5]
	  (247.5:1) to[out=67.5,in=90]
	  (270:1) to[out=90,in=112.5]
	  (292.5:1) to[out=112.5,in=135]
	  (315:1) to[out=135,in=157.5]
	  (337.5:1) to[out=157.5,in=180]
	  (0:1);
	  
\draw[very thick] (-15:1) arc(-15:165:1);
\draw[very thick] (-15:0.96) -- (-15:1.04);
\draw[very thick] (165:0.96) -- (165:1.04);
	  
\draw[right] (1,0) node {$\frac{0}{1}$};
\draw[above] (0,1) node {$\frac{1}{1}$};
\draw[left] (-1,0) node {$\frac{1}{0}$};
\draw[below] (0,-1) node {$-\frac{1}{1}$};
\draw[above=2,left] (135:1) node {$\frac{2}{1}$};
\draw[above=2,left] (157.5:1) node {$\frac{3}{1}$};
\draw[below=2,right] (-45:1) node {$-\frac{1}{2}$};
\draw[below=2,right] (-22.5:1) node {$-\frac{1}{3}$};

\draw[below,right] (-15:1) node {$\Lambda^s$};
\draw[above,left] (165:1) node {$\Lambda^u$};
\end{tikzpicture}
\caption{The interval $[\Lambda^s,\Lambda^u]$ for $A=C_{-5,2}$.  Notice that $s_1=\infty$ is connected to $0,1,2,3$, while 0 is the only element of $[\Lambda^s,\Lambda^u]$ to which either of $-1$ or $-1/2$ is connected.}
\label{fig:cfb}
\end{figure}

\begin{lemma}\label{lemma:cfb}
Choose $a\leq -3$ and $\tau\geq 0$, and let $A=C_{a,\tau}$, with stable and unstable slopes $\Lambda^s$ and $\Lambda^u$.  Let $(T^2\times I,\xi)$ be tight and have boundary slopes $s_1$ and $s_0$ corresponding to $(0,1)^T$ and $A(0,1)^T$, respectively, and let $T^2\subset T^2\times I$ be a boundary component for a basic slice.  Then
\begin{enumerate}
	\item if $T^2$ is not a boundary torus, there is exactly one slope $s$ connected to $s(T^2)$ in the arc $[\Lambda^s,\Lambda^u]$ on the Farey tessellation;\label{lemma:unique-slope}
	\item if $T^2=T^2\times\{1\}$, there are $\tau+2$ slopes connected to $s(T^2)$ in the arc $[\Lambda^s,\Lambda^u]$.\label{lemma:boundary-torus}
\end{enumerate}
\end{lemma}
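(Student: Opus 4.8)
The plan is to turn both statements into finite arithmetic on the Farey tessellation, once we (i) locate the slopes $\Lambda^s,\Lambda^u$ of $A=C_{a,\tau}$ and (ii) identify the fiber slopes of the block $T^2\times I$. First I would compute $C_{a,\tau}=T^{a+1}S(T^{-2}S)^\tau T^{-1}$ directly. Writing $m:=|a+2|\ge 1$, the matrix $T^{-2}S$ is parabolic with $(T^{-2}S-\mathrm{Id})^2=0$, so $(T^{-2}S)^\tau=\mathrm{Id}+\tau(T^{-2}S-\mathrm{Id})$, and a routine multiplication presents $C_{a,\tau}$ as a $2\times2$ integer matrix with bottom row $(-(1+\tau),\,1)$ and trace $t=-a-(a+2)\tau=m(1+\tau)+2$. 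The eigenvalue equation then shows that the eigenvector for an eigenvalue $\lambda$ has slope $\tfrac{-(1+\tau)}{\lambda-1}$; if $\lambda_1>1>\lambda_2>0$ are the eigenvalues, then $\Lambda^s=\tfrac{-(1+\tau)}{\lambda_1-1}<0$ and $\Lambda^u=\tfrac{1+\tau}{1-\lambda_2}>0$. From $\lambda_1+\lambda_1^{-1}=t$ and $\lambda_1>1$ one gets the crude bound $\lambda_1>t-1=m(1+\tau)+1\ge\tau+2$, and feeding this into the two formulas gives exactly the estimates I need:
\[
\Lambda^s\in\Bigl(-\tfrac1m,\,0\Bigr)\qquad\text{and}\qquad \Lambda^u\in(\tau+1,\,\tau+2).
\]

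Next I would identify the fiber slopes. Conjugating the standard continued fraction block $T^2\times[0,m]$ (boundary slopes $-1$ and $-1-m$, intermediate slopes $-2,\dots,-m$) by the map sending $(p,q)^T$ to $(p+q,p)^T$ produces a continued fraction block whose boundary slopes are precisely $\infty$ and $\tfrac1{a+2}=-\tfrac1m$ and whose intermediate slopes are $-1,-\tfrac12,\dots,-\tfrac1{m-1}$; these are the fiber slopes of $T^2\times I$. Note that $0$ is Farey-adjacent to $\infty$ and to every $-\tfrac1j$, and $0$ lies on the counterclockwise arc $[\Lambda^s,\Lambda^u]$ since $\Lambda^s<0<\Lambda^u$. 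For part \ref{lemma:boundary-torus}, the Farey neighbors of $s(T^2)=\infty$ are exactly the integers; since the arc from $\Lambda^s$ to $\Lambda^u$ avoids $\infty$ and has its endpoints in $(-1,0)$ and $(\tau+1,\tau+2)$, the integers it contains are $0,1,\dots,\tau+1$, which is $\tau+2$ of them.

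For part \ref{lemma:unique-slope}, let $s(T^2)=-\tfrac1j$ with $1\le j\le m-1$. A Farey neighbor $p/q$ of $-\tfrac1j$ satisfies $|q+jp|=1$; from this I would check in one line that $-\tfrac1j$ has no positive neighbor, that $\infty$ is a neighbor only when $j=1$ and in any case lies outside $[\Lambda^s,\Lambda^u]$, and that every negative neighbor of $-\tfrac1j$ is $\le-\tfrac1{j+1}\le-\tfrac1m<\Lambda^s$. The sub-claim doing the work here is that no neighbor of $-\tfrac1j$ lies strictly between $-\tfrac1{j+1}$ and $0$, which follows since the inequality $-\tfrac1{j+1}<p/q<0$ together with $|q+jp|=1$ forces $|p|<1$. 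Hence the unique neighbor of $-\tfrac1j$ lying in $[\Lambda^s,\Lambda^u]$ is $0$.

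The main obstacle is bookkeeping rather than ideas: one must be careful that $[\Lambda^s,\Lambda^u]$ is the arc through $0$ (not through $\infty$) — this is where the sign conventions of Section~\ref{sec:intro-torus-bundles} and the placement of $\Lambda^s,\Lambda^u$ genuinely matter — and one must justify that the fiber slopes of the block $T^2\times I$ considered in isolation are independent of $\tau$, so that all of the $\tau$-dependence in \ref{lemma:boundary-torus} enters through $\Lambda^u$. I would also check the degenerate case $m=1$ (in which \ref{lemma:unique-slope} is vacuous) and confirm the counts against the worked example $A=C_{-5,2}$ of Figure~\ref{fig:cfb}.
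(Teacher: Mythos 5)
Your proposal is correct and follows essentially the same strategy as the paper: expand $C_{a,\tau}$, bound $\Lambda^s$ and $\Lambda^u$ via the eigenvalue data (the paper computes $\Lambda^u$ via the quadratic formula and bounds $\Lambda^s$ using $[\Lambda^s,\Lambda^u]\subset[s_0,s_1]$, while you use the cleaner inequality $\lambda_1>t-1$ to get both), identify the fiber slopes $\infty,-1,-1/2,\ldots,-1/m$, and count Farey neighbors in $[\Lambda^s,\Lambda^u]$. Your handling of the interior-slope case via the explicit criterion $|q+jp|=1$ is somewhat more careful than the paper's terse assertion, but the underlying argument is the same.
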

\begin{proof}
First, note that the boundary torus $T^2\times\{1\}$ has slope $\infty$, and that $(T^2\times I,\xi)$ contains $|a+2|$ basic slices.  In particular, there are $|a+3|$ non-boundary tori which lie between basic slices, and these have slopes
\[
-1,-1/2,\ldots,1/(a+3).
\]
We will prove this lemma by showing that $0$ is the only value in $[\Lambda^s,\Lambda^u]$ that is connected to any of these slopes on the Farey tessellation, and by counting the values in $[\Lambda^s,\Lambda^u]$ which are connected to $\infty$.  Note that expanding $A$ gives
\[
A = \left(\begin{matrix}
-\tau-(\tau+1)(1+a) & a+2\\
-(\tau+1) & 1
\end{matrix}\right),
\]
and thus $T^2\times\{0\}$ has slope $\frac{1}{a+2}$.  In particular, since $[\Lambda^s,\Lambda^u]\subset[s_0,s_1]$, we have $\frac{1}{a+2}<\Lambda^s$.  Similarly, because
\[
A^{-1} = \left(\begin{matrix}
1 & -a-2\\
\tau+1 & -\tau-(\tau+1)(1+a)
\end{matrix}\right),
\]
$A^{-1}(0,1)^T$ has slope $1+\tau+\frac{1}{|a+2|}>\Lambda^u$.  At the same time,
\[
A\left(\begin{matrix} 1\\ \Lambda^u\end{matrix}\right) = \lambda\left(\begin{matrix} 1\\ \Lambda^u\end{matrix}\right)
\]
for some $0<\lambda<1$.  An explicit calculation shows that
\[
\Lambda^u = \frac{1}{2}(1+\tau+\sqrt{(1+\tau)^2+4(1+\tau)/|a+2|}) > 1+\tau.
\]
So $1+\tau < \Lambda^u < 1+\tau+\frac{1}{|a+2|} < 2+\tau$.  Similar reasoning shows that $\frac{1}{a+2}<\Lambda^s<0$.  On the Farey tessellation, $\infty$ is connected only to integers, and we see that $[\Lambda^s,\Lambda^u]$ contains precisely $\tau+2$ integers: $0,1,\ldots,\tau,1+\tau$.  So $s_1$ is connected to $\tau+2$ slopes in $[\Lambda^s,\Lambda^u]$, proving part (\ref{lemma:boundary-torus}).  We also notice that each of the slopes $-1,-1/2,\ldots,1/(a+3)$ is connected to 0 on the Farey tessellation, and, with the exception of $-1$, is connected only to rational numbers $p/q<1/(a+2)$.  In particular, 0 is the only value in $[\Lambda^s,\Lambda^u]$ to which any of the slopes $-1,-1/2,\ldots,1/(a+3)$ is connected.  See Figure \ref{fig:cfb}.  This proves part (\ref{lemma:unique-slope}).
\end{proof}

While the precise slopes identified by Lemma \ref{lemma:cfb} change when we compose to obtain
\[
A=\pm C_{a_0,\tau_0}\cdots C_{a_k,\tau_k},
\]
the counts do not.  Indeed, each $C_{a_i,\tau_i}$ is an element of $SL(2,\mathbb{Z})$ and will therefore preserve connections and order on the Farey tessellation.  So while the finite list of slopes $s_1=\infty,-1,-1/2,\ldots,1/(a+3),1/(a+2)=s_0$ and the interval $[\Lambda^s,\Lambda^u]$ will change after composition, the connections between the former and elements of the latter will be unaltered.\\

In particular, if $(M,\xi)$ has a virtually overtwisted continued fraction block, then there is a mixed torus $T^2$ which is interior to this block.  According to Lemma \ref{lemma:cfb}, applying the JSJ decomposition along this mixed torus leaves us with precisely one possible meridian $\mu$ for $S_{t_0+1}$.  Another possibility is for $(M,\xi)$ to be virtually overtwisted, but to have continued fraction blocks which are all universally tight.  In this case we find a mixed torus $T^2$ which lies between two continued fraction blocks.  Lemma \ref{lemma:cfb} tells us that $s(T^2)$ is connected to at most $2+\max\{\tau_i\}$ distinct slopes in $[\Lambda^s,\Lambda^u]$, and thus there are at most $2+\max\{\tau_i\}$ distinct lens spaces which may result from applying the JSJ decomposition along $T^2$.  This completes the proof of Proposition \ref{hyperbolic-result}.

\begin{example}
Let $M$ be the positive hyperbolic torus bundle with coefficients $(a_0,a_1,a_2)=(-4,-5,-4)$ and $(\tau_0,\tau_1,\tau_2)=(0,2,0)$.  That is, $M$ has monodromy
\[
A = T^{-4}ST^{-5}ST^{-2}ST^{-2}ST^{-4}S = \left(\begin{matrix}
119 & -83\\ -43 & 30
\end{matrix}\right).
\]
According to the classification of tight contact structures on torus bundles (\cite{honda2000classification2}), the tight contact structures on $M$ are in one-to-one correspondence with the tight contact structures on $T^2\times I$ which have boundary slopes $s_0=-1$ and $s_1=[-4,-2,-2,-5,-3]=-119/36$.  Each such tight structure decomposes into seven basic slices, distributed among three continued fraction blocks, visualized as follows:
\begin{center}
\begin{tikzpicture}[xscale=10,yscale=1.8]
\draw [thick] (0,0) -- (1,0)
	  (0,-0.2) -- (0,0.2)
	  (0,0.2) -- (0.02,0.2)
	  (0,-0.2) -- (0.02,-0.2)
	  (0.1428,-0.1) -- (0.1428,0.1)
	  (0.2857,-0.2) -- (0.2857,0.2)
	  (0.2657,-0.2) -- (0.3057,-0.2)
	  (0.2657,0.2) -- (0.3057,0.2)
	  (0.4285,-0.1) -- (0.4285,0.1)
	  (0.5714,-0.1) -- (0.5714,0.1)
	  (0.7142,-0.2) -- (0.7142,0.2)
	  (0.6942,-0.2) -- (0.7342,-0.2)
	  (0.6942,0.2) -- (0.7342,0.2)
	  (0.8571,-0.1) -- (0.8571,0.1)
	  (0.98,-0.2) -- (1,-0.2)
	  (0.98,0.2) -- (1,0.2)
	  (1,-0.2) -- (1,0.2);
	  
\node [above] at (0,0.2) {$-1$};
\node [above] at (0.1428,0.2) {$-2$};
\node [above] at (0.2857,0.2) {$-3$};
\node [above] at (0.4285,0.2) {$\frac{-13}{4}$};
\node [above] at (0.5714,0.2) {$\frac{-23}{7}$};
\node [above] at (0.7142,0.2) {$\frac{-33}{10}$};
\node [above] at (0.8571,0.2) {$\frac{-76}{23}$};
\node [above] at (1,0.2) {$\frac{-119}{36}$};

\node [below] at (0,-0.2) {$0$};
\node [below] at (1,-0.2) {$1$};
\end{tikzpicture}
\end{center}
The long tick marks at slopes $-1,-3,-33/10$, and $-119/36$ indicate divisions between continued fraction blocks.  The tori $T^2\times\{0\}$ and $T^2\times\{1\}$ (whose dividing sets have slopes $-1$ and $-119/36$, respectively) will be identified by the monodromy $A$.\\

We now determine a tight structure on $M$ by decorating each basic slice with a sign, indicating the sign of the relative Euler class of the tight structure when restricted to this basic slice.  Because basic slices may be shuffled within a given continued fraction block, there are $3\cdot 4\cdot 3=36$ tight contact structures on $M$.  As observed by Bhupal-Ozbagci in \cite{bhupal2014canonical}, each of these structures admits a Stein filling, depicted in Figure \ref{fig:example-filling}.\\

\begin{figure}
\centering
\def\svgwidth{0.8\textwidth}
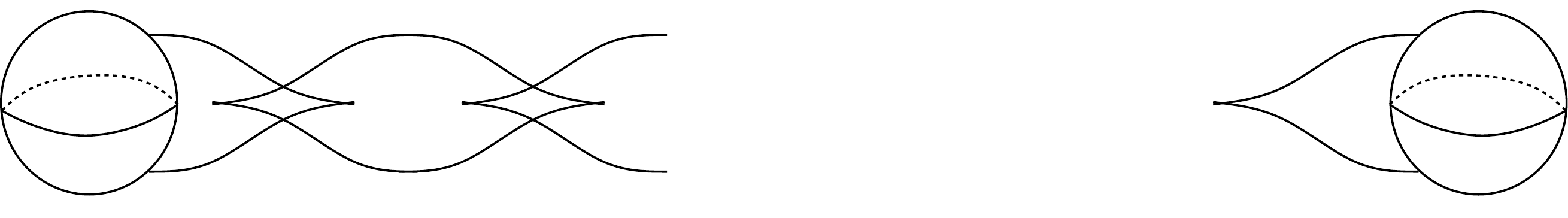
\caption{The standard filling of any tight structure on the torus bundle with positive monodromy coefficients $(r_0,r_1,r_2,r_3,r_4)=(-4,-5,-2,-2,-4)$ is obtained from Legendrian surgery along the above link in $(S^1\times S^2,\xi_{std})$.  Each knot $K_i$ is stabilized so that its Thurston-Bennequin number exceeds its framing by 1, and the rotation numbers are determined by the tight structure $\xi$ on $M$.  The lens spaces produced by Theorem \ref{main-theorem} may be obtained by erasing the 1-handle and an unknot, leaving a link in $(S^3,\xi_{std})$.}
\label{fig:example-filling}
\end{figure}

Each of the three continued fraction blocks in $M$ has a corresponding tight lens space.  Stein fillings of these lens spaces may be obtained from Figure \ref{fig:example-filling} by erasing the 1-handle along with one of the unknots $K_0$, $K_1$, or $K_4$.  So the lens spaces associated to the continued fraction blocks have topological types $L(43,13)$, $L(37,10)$, and $L(49,34)$, respectively.  The tight structure on the resulting lens space is determined by Figure \ref{fig:example-filling}.  If a given continued fraction block in $(M,\xi)$ is virtually overtwisted, then $(M,\xi)$ admits a mixed torus interior to this continued fraction block, and each exact filling of $(M,\xi)$ may be obtained from an exact filling of the corresponding lens space by round 1-handle attachment.\\

There are eight tight contact structures on $M$ which are virtually overtwisted, but for which all three continued fraction blocks are universally tight.  In these cases, $(M,\xi)$ admits a mixed torus which lies at the boundary of two continued fraction blocks, and the list of lens spaces produced by Theorem \ref{main-theorem} consists of the lens spaces corresponding to these two continued fraction blocks, as well as all lens spaces \emph{between} these.  These extra lens spaces are obtained from Figure \ref{fig:example-filling} by deleting the round 1-handle and an unknot with a $-2$-framing.  For instance, if we have a mixed torus of slope $-33/10$ --- sitting at the boundary of the second and third continued fraction blocks --- then the list of lens spaces produced by Theorem \ref{main-theorem} has topological type
\[
(L(37,10),L(123,26),L(127,71),L(49,34)).
\]
A mixed torus between the first and second or third and first continued fraction blocks will yield a list of exactly two lens spaces, as there are no intermediate lens spaces in these cases.\\

The following diagram summarizes this example:
\begin{center}
\begin{tikzpicture}[xscale=14,yscale=1.8]
\draw [thick] (0,0) -- (1,0)
	  (0,-0.2) -- (0,0.2)
	  (0,0.2) -- (0.02,0.2)
	  (0,-0.2) -- (0.02,-0.2)
	  (0.1428,-0.1) -- (0.1428,0.1)
	  (0.2857,-0.2) -- (0.2857,0.2)
	  (0.2657,-0.2) -- (0.3057,-0.2)
	  (0.2657,0.2) -- (0.3057,0.2)
	  (0.4285,-0.1) -- (0.4285,0.1)
	  (0.5714,-0.1) -- (0.5714,0.1)
	  (0.7142,-0.2) -- (0.7142,0.2)
	  (0.6942,-0.2) -- (0.7342,-0.2)
	  (0.6942,0.2) -- (0.7342,0.2)
	  (0.8571,-0.1) -- (0.8571,0.1)
	  (0.98,-0.2) -- (1,-0.2)
	  (0.98,0.2) -- (1,0.2)
	  (1,-0.2) -- (1,0.2);
	  
\node [above] at (0,0.2) {$-1$};
\node [above] at (0.1428,0.2) {$-2$};
\node [above] at (0.2857,0.2) {$-3$};
\node [above] at (0.4285,0.2) {$\frac{-13}{4}$};
\node [above] at (0.5714,0.2) {$\frac{-23}{7}$};
\node [above] at (0.7142,0.2) {$\frac{-33}{10}$};
\node [above] at (0.8571,0.2) {$\frac{-76}{23}$};
\node [above] at (1,0.2) {$\frac{-119}{36}$};

\node [below] at (0,-0.2) {$0$};
\node [below] at (1,-0.2) {$1$};

\node [below] at (0,-0.6) {$L(43,13)$};
\node [below] at (0,-0.4) {$L(49,34)$};

\node [below] at (0.1428,-0.4) {$L(43,13)$};

\node [below] at (0.2857,-0.6) {$L(37,10)$};
\node [below] at (0.2857,-0.4) {$L(43,13)$};

\node [below] at (0.4285,-0.4) {$L(37,10)$};

\node [below] at (0.5714,-0.4) {$L(37,10)$};

\node [below] at (0.7142,-1) {$L(49,34)$};
\node [below] at (0.7142,-0.8) {$L(127,71)$};
\node [below] at (0.7142,-0.6) {$L(123,26)$};
\node [below] at (0.7142,-0.4) {$L(37,10)$};

\node [below] at (0.8571,-0.4) {$L(49,34)$};
\end{tikzpicture}
\end{center}
If $\xi$ is a virtually overtwisted tight contact structure on $M$, then there must be a mixed torus at one of the seven slopes above.  (Because $T^2\times\{1\}$ is identified with $T^2\times\{0\}$, we need not consider this torus separately.)  Below each slope we see the diffeomorphism types of the lens spaces produced by Theorem \ref{main-theorem} when a mixed torus occurs with this slope.  As mentioned above, the tight structures on these lens spaces will be determined by $\xi$, as depicted in Figure \ref{fig:example-filling}.
\end{example}

\section{Distinct decompositions of fillings}\label{sec:overcounting}
Without a full classification of symplectic fillings of lens spaces, Theorem \ref{main-theorem} cannot be used to completely classify the fillings of virtually overtwisted hyperbolic torus bundles.  Even with such a classification for lens spaces, the only immediate consequence of Theorem \ref{main-theorem} is a recipe which is guaranteed to construct all fillings of such a torus bundle, but these constructions need not be unique.  Indeed, if $(L(p_1,q_1),\xi_1),\ldots,(L(p_m,q_m),\xi_m)$ is the list of lens spaces provided by Theorem \ref{main-theorem} for some hyperbolic torus bundle $(M,\xi)$, it is possible that there are fillings $(W_i,\omega_i)$ of $(L(p_i,q_i),\xi_i)$ for $i=1,\ldots,m$, each of which yield the same filling $(W,\omega)$ of $(M,\xi)$ after round 1-handle attachment --- in fact, we saw in Section \ref{sec:intro} that this is the case for the standard fillings of the lens spaces.\\

To fully explicate such overcounting of the fillings of $(M,\xi)$ will require a detailed understanding of the fillings of $(L(p_1,q_1),\xi_1),\ldots,(L(p_m,q_m),\xi_m)$.  The proof of Theorem \ref{main-theorem} establishes a relationship among the lens spaces $(L(p_1,q_1),\xi_1),\ldots,(L(p_m,q_m),\xi_m)$ that allows us to identify one source of overcounting.  In particular, the construction associates to each lens space $(L(p_i,q_i),\xi_i)$ a pair of distinguished Legendrian knots $L^i_-,L^i_+\subset(L(p_i,q_i),\xi_i)$ which arise as the core curves of the solid tori which are glued onto $T^2\times\{0\}$ and $T^2\times\{1\}$, respectively, after $(M,\xi)$ is cut open along its mixed torus.  For $1\leq i\leq m-1$, we obtain the lens space $(L(p_{i+1},q_{i+1}),\xi_{i+1})$ by simultaneously performing $(+1)$-surgery along $L^i_+$ and $(-1)$-surgery along $L^i_-$.\\

This relationship between the lens spaces gives us an algorithm for building fillings of $(L(p_{i+1},q_{i+1}),\xi_{i+1})$ from fillings of $(L(p_i,q_i),\xi_i)$.  If $(W_i,\omega_i)$ is a strong symplectic filling of $(L(p_i,q_i),\xi_i)$ in which $L^i_+$ bounds a Lagrangian disk, then we may remove a neighborhood of the disk from $(W_i,\omega_i)$ to obtain a symplectic manifold strongly filling its boundary (c.f. \cite[Theorem 3.1]{conway2017symplectic}), and the effect on the boundary is to perform $(+1)$-surgery along $L^i_+$.  We may then attach a Weinstein 2-handle to this new filling along $L^i_{-}$, and at the boundary this has the effect of performing a $(-1)$-surgery along $L^i_{-}$.  The result is $(W_{i+1},\omega_{i+1})$, a strong symplectic filling of $(L(p_{i+1},q_{i+1}),\xi_{i+1})$, and Proposition \ref{prop:move-2-handle} will tell us that this filling leads to the same filling of $(M,\xi)$ as does $(W_i,\omega_i)$ after round 1-handle attachment.\\

To make the statement of Proposition \ref{prop:move-2-handle} less cumbersome, we establish some notation.  Suppose we have a strong symplectic filling $(X,\omega)$ of a contact manifold $(M,\xi)$, with Legendrian knots $L_0,L_1\subset(M,\xi)$.  Moreover, suppose that $L_0$ is the boundary of a Lagrangian disk $D\subset(X,\omega)$ which meets $\partial X$ transversely.  As alluded to above, we may identify a neighborhood $N\subset X$ of $D$ with a neighborhood of the cocore of a Weinstein 2-handle, and remove this neighborhood to obtain a new symplectic filling $(\overline{X},\overline{\omega})$.  (The existence of such a neighborhood is proven in Theorem 3.1 of \cite{conway2017symplectic}.)  With $(X,\omega)$, $D$, $L_0$, and $L_1$ understood, we denote by $(X',\omega')$ the strong symplectic filling which results from attaching a Weinstein 2-handle to $(\overline{X},\overline{\omega})$ along $L_1$.  Each of $L_0$ and $L_1$ has a natural Legendrian pushoff in $\partial(X',\omega')$, given by the boundary of the core and cocore  of the associated 2-handle, respectively, which we denote $L'_0$ and $L'_1$.  With this notation in hand, we have the following result.

\begin{proposition}\label{prop:move-2-handle}
Let $(X,\omega)$ and $(X',\omega')$ be strong symplectic fillings related by the construction above.  Let $(W,\omega_W)$ be the strong symplectic filling obtained by attaching a round symplectic 1-handle to $(X,\omega)$ along $L_0,L_1$, and let $(W',\omega'_{W})$ be the analogous filling for $(X',\omega')$.  Then $(W,\omega)$ and $(W',\omega'_{W})$ are symplectomorphic fillings of $(M,\xi)$.
\end{proposition}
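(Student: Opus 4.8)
The plan is to dismantle the round symplectic $1$-handle into ordinary Weinstein handles, recognize the Lagrangian-disk removal as the inverse of a Weinstein $2$-handle attachment, and thereby reduce the proposition to a symmetric statement about Weinstein handlebodies which I then attack by handle calculus. For the first step I would use the description of round $1$-handle attachment recalled in Section~\ref{sec:background-jsj} (following \cite{avdek2012liouville,adachi2017round}): attaching a round symplectic $1$-handle to $(X,\omega)$ along standard neighborhoods of $L_0,L_1$ is the same as attaching a Weinstein $1$-handle $g$ with feet at a point of $L_0$ and a point of $L_1$, followed by a Weinstein $2$-handle $k$ attached along the Legendrian $L_0\#_g L_1$ obtained by banding $L_0$ to $L_1$ across $g$. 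So $W = X\cup g\cup k$, and likewise $W' = X'\cup g'\cup k'$.

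Next I would invoke \cite[Theorem~3.1]{conway2017symplectic}: a neighborhood of the Lagrangian disk $D$ with $\partial D = L_0$ is symplectomorphic to a neighborhood of the cocore of a Weinstein $2$-handle, so $X = \overline X\cup h_0$, where $h_0$ is a Weinstein $2$-handle attached to $(\overline X,\overline\omega)$ along a Legendrian $a:=\Lambda_0\subset\partial\overline X$, with $D$ the cocore of $h_0$ and $L_0$ its belt sphere. Since $L_1$ is disjoint from $D$ it survives in $\partial\overline X$; put $b:=L_1$. Then $X' = \overline X\cup h_1$ with $h_1$ the Weinstein $2$-handle along $b$, the knot $L_0'=a$ is the attaching (core-boundary) circle of $h_0$, and $L_1'$ is the belt sphere of $h_1$. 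Expressing everything over the common base $\overline X$, one sees that $W$ is "attach a Weinstein $2$-handle along $a$, then a round $1$-handle joining its belt sphere to $b$", while $W'$ is "attach a Weinstein $2$-handle along $b$, then a round $1$-handle joining its belt sphere to $a$". Hence the proposition is equivalent to the symmetric claim that for disjoint Legendrians $a,b\subset\partial\overline X$ these two fillings, $Y_{a,b}$ and $Y_{b,a}$, are symplectomorphic. (This is exactly the sense in which the $2$-handle "moves" from $a$ to $b$.)

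To prove $Y_{a,b}\cong Y_{b,a}$ I would again unwind the round $1$-handle: $Y_{a,b} = \overline X\cup h_a\cup g\cup k$, with $g$ a Weinstein $1$-handle whose feet lie on the belt sphere $\beta_a$ of $h_a$ and on $b$, and $k$ a Weinstein $2$-handle along $\beta_a\#_g b$. Now $\beta_a$ is isotopic in $\overline X\cup h_a$ to a meridian $\mu_a$ of $a$ by pushing it through $h_a$; carrying the foot of $g$ along this isotopy drags it off the belt region and through $h_a$, which amounts to a handle slide of $k$ over $h_a$. After these moves — each preserving the symplectomorphism type of the filling, by the standard Weinstein handle calculus — one is left with $\overline X\cup g'\cup h_a\cup k'$, where $g'$ is a Weinstein $1$-handle attached directly to $\overline X$ joining a point of a meridian of $a$ to a point of $b$, and $k'$ is a Weinstein $2$-handle along $\mu_a\#_{g'}b$. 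Running the mirror-image sequence on $Y_{b,a}$ yields $\overline X\cup g''\cup h_b\cup k''$ with $g''$ joining a meridian of $b$ to a point of $a$ and $k''$ along $\mu_b\#_{g''}a$. Identifying these two presentations is then a diagrammatic matter: both are $\overline X$ with one $1$-handle joining a neighborhood of $a$ to a neighborhood of $b$ and two $2$-handles — one along $a$ (resp.\ $b$) and one along the band sum of a meridian of $a$ (resp.\ $b$) with $b$ (resp.\ $a$) — and they match after a final handle slide and an isotopy of attaching regions.

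The main obstacle is precisely this bookkeeping: checking that pushing the $1$-handle foot through $h_a$ really is the handle slide of $k$ over $h_a$ that I claimed, tracking exactly what happens to the attaching circle of $k$, and then lining up the two resulting presentations. This is concrete but fiddly and is best carried out with an explicit picture, in the spirit of Figures~\ref{fig:filling}--\ref{fig:lens-space-filling}. One must also be careful that every move is performed in the Weinstein category, not merely the smooth one — that the attaching data stays Legendrian with the correct $tb-1$ framing and that the Liouville structure extends over each handle — which is where the handle calculus is genuinely used; the boundary identifications (each round $1$-handle attachment is designed to recover $(M,\xi)$) then guarantee that $W$ and $W'$ are both fillings of $(M,\xi)$, so a symplectomorphism of the total spaces is all that remains to produce.
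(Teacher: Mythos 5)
Your first half tracks the paper's proof closely: you dismantle the round $1$-handle into a Weinstein $1$-handle plus a Weinstein $2$-handle (this is exactly Lemma~\ref{lemma:round-is-weinstein}), use \cite[Theorem~3.1]{conway2017symplectic} to write $X=\overline X\cup h_0$ with $D$ the cocore of $h_0$, and express both $W$ and $W'$ as Weinstein handlebodies built on the common base $(\overline X,\overline\omega)$. Your reduction to the symmetric claim ``$Y_{a,b}\cong Y_{b,a}$'' is a correct reformulation of the paper's setup.

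Where you diverge is in the closing step, and this is where your argument has a genuine gap that you yourself flag. You attempt to prove $Y_{a,b}\cong Y_{b,a}$ by a sequence of isotopies and handle slides --- pushing the foot of $g$ through $h_a$, reinterpreting this as a slide of $k$ over $h_a$, and then matching the two resulting presentations ``after a final handle slide and an isotopy of attaching regions.'' None of these moves is verified, and the identification ``pushing the foot through $h_a$ $=$ handle slide of $k$ over $h_a$'' is not obviously correct as stated (an isotopy of the attaching circle is not a handle slide, and the framing/Legendrian bookkeeping needs care). The paper avoids all of this. Instead of comparing two handlebodies built ``$2$-handle, then round $1$-handle,'' it \emph{reorders}: both $W$ and $W'$ are realized by first attaching the round $1$-handle to $(\overline X,\overline\omega)$ along $\overline L_0,\overline L_1$ --- yielding a common filling $(\overline X',\overline\omega')$ --- and then attaching a single Weinstein $2$-handle along $\overline L_0'$ (for $W$) or $\overline L_1'$ (for $W'$). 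The key observation is then geometric rather than combinatorial: because the round $1$-handle was attached along $\overline L_0$ and $\overline L_1$, the pushoffs $\overline L_0'$ and $\overline L_1'$ become Legendrian isotopic in $\partial(\overline X',\overline\omega')$, the isotopy being a cylinder sitting in the boundary of the round $1$-handle (in Avdek's model, essentially $[-1,1]\times\partial\Lambda$). Attaching a $2$-handle along Legendrian isotopic knots gives symplectomorphic results, and the proof ends. So the missing idea in your proposal is exactly this cylinder: rather than tracking how $k$'s attaching circle migrates under slides, notice that the two candidate attaching circles for the final $2$-handle are already Legendrian isotopic through the round $1$-handle's side boundary. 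If you want to complete your route instead, you would have to carry out the handle calculus in full, being careful to stay Legendrian throughout; the paper's argument is strictly less work.
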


Our strategy for proving this result is to realize round symplectic 1-handle attachment as a sequence of Weinstein handle attachments, and then to reorder the Weinstein handles.  The decomposition of a round 1-handle into Weinstein handles is explained in great generality in \cite[Section 7.2]{avdek2012liouville}, but the precise statement we need here is the following.

\begin{lemma}\label{lemma:round-is-weinstein}
Fix a strong symplectic filling $(X,\omega)$ of a contact 3-manifold $(M,\xi)$, and identify Legendrian knots $L_0,L_1\subset(M,\xi)$.  Consider the following symplectic fillings:
\begin{enumerate}
	\item $(X_r,\omega_r)$, obtained by attaching a round symplectic 1-handle to $(X,\omega)$ along the knots $L_0,L_1\subset (M,\xi)$;
	\item $(X_w,\omega_w)$, obtained by attaching a Weinstein 1-handle to $(X,\omega)$ along points $p_i\in L_i$, $i=0,1$, and then attaching a Weinstein 2-handle to the resulting filling along the knot $L$ obtained by surgering $L_0$ and $L_1$ along $p_0$ and $p_1$.
\end{enumerate}
Then $(X_r,\omega_r)$ is symplectomorphic to $(X_w,\omega_w)$.
\end{lemma}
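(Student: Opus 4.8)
The plan is to specialize Avdek's general decomposition of round symplectic handles \cite[Section 7.2]{avdek2012liouville} to the present three-dimensional contact setting, keeping careful track of the attaching data so that the two fillings are seen to agree up to symplectomorphism and not merely diffeomorphism.

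First I would recall the model of the round $1$-handle $(H,\omega_H)$ from the excerpt: $H=[-1,1]_t\times S$ with $S=S^1_\theta\times\mathbb{D}^2_{(x,y)}$, $\omega_H = dt\wedge dy + d\theta\wedge dx$, and a Liouville vector field $V$ with $\mathcal{L}_V\omega_H=\omega_H$ that exits $H$ through $[-1,1]\times\partial S$ and enters through $\{\pm1\}\times S$. Topologically $H\cong S^1\times D^3$, and it is glued to $(X,\omega)$ along the two solid tori $\{\pm1\}\times S$, identified with standard neighborhoods $S_0,S_1$ of $L_0,L_1$. The key observation is that $H$ carries a second handle decomposition adapted to its exit region: fixing a point $\theta_0\in S^1$, a suitable neighborhood $N$ of $[-1,1]\times\{\theta_0\}\times\{0\}$ inside $H$ is a Weinstein $1$-handle $h^1$, and the closure of $H\setminus N$ is a Weinstein $2$-handle $h^2$, attached along the knot $L$ obtained from $L_0\sqcup L_1$ by surgering along the two feet $p_0\in L_0$, $p_1\in L_1$ of $h^1$. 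That $h^1$ and $h^2$ inherit the standard Weinstein handle models, after a deformation supported near the corners, is exactly the computation carried out in \cite[Section 7.2]{avdek2012liouville}.

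The remaining points are bookkeeping: (i) the attaching region of $h^1$ in $\partial X$ is a pair of Darboux balls, one in each $S_i$, centered on $L_i$, using that $\ker(dy-x\,d\theta)$ makes $S^1\times\{0\}\subset S$ a Legendrian with standard neighborhood $S$; hence attaching $h^1$ joins $L_0$ and $L_1$ into a single Legendrian $L$; (ii) the framing with which $h^2$ is attached along $L$ is the one induced by the $[-1,1]$-direction of $H$, and this coincides with the contact framing of $L$, so that the $2$-handle attachment is the standard contact $(-1)$-surgery; (iii) the corner-smoothings of $\partial(X\cup h^1\cup h^2)$ and of $\partial(X\cup H)$ can be chosen to agree near the common boundary, so the Liouville vector fields match there and the induced contact structures coincide. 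Granting these, the Liouville structures on $X_r$ and $X_w$ agree, which proves the lemma.

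I expect the main obstacle to be (iii) --- reconciling the two corner-smoothings and checking that the Liouville vector field of the round handle restricts, on each piece, to the standard Weinstein model once the corners are rounded. This is implicit in Avdek's treatment, so the work is largely one of translation and of fixing conventions (orientations of $S_0,S_1$, the product framing, the placement of $\theta_0$) consistently throughout; point (ii) also requires a short linear-algebra check relating the product framing to the Thurston--Bennequin framing, of the same flavor as the relations $tb(K_i)=r_i+1$ used in Figures \ref{fig:filling} and \ref{fig:lens-space-filling}.
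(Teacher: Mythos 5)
Your proposal follows the same route as the paper's own proof: both decompose the round symplectic 1-handle into a Weinstein 1-handle (a neighborhood of a core arc running between the two attaching ends, corresponding to the Darboux ball $\Sigma_0\subset\Sigma=D^*S^1$) and a complementary Weinstein 2-handle, invoking \cite[Section 7.2]{avdek2012liouville}. The paper discharges your bookkeeping points (ii) and (iii) by working with the explicit Liouville primitive $\lambda_\Sigma=-\theta\,dz-2z\,d\theta+\beta$, the filtration $(H_{\Sigma_0},\lambda_{\Sigma_0})\subset(H_\Sigma,\lambda_\Sigma)$, and the Lagrangian core disk $\tilde{\Lambda}=[-1,1]\times\Lambda$ on which $\lambda_\Sigma$ vanishes, which both produces the surgered knot as $\partial\tilde\Lambda$ and fixes the attaching framing, rather than treating the framing and corner-smoothing as separate checks.
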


Before providing a proof of Lemma \ref{lemma:round-is-weinstein}, let us say what we mean by \emph{surgering $L_0$ and $L_1$ along $p_0$ and $p_1$}.  A 4-dimensional Weinstein 1-handle $H_1$ admits a Lagrangian submanifold-with-boundary $\Lambda\subset H_1$ to which the Liouville vector field on $H_1$ is tangent, and such that this vector field gives $\Lambda$ the structure of a 2-dimensional Weinstein 1-handle.  We then have Legendrian submanifolds $\partial_{in}\Lambda\subset\partial_{in}H_1$ and $\partial_{out}\Lambda\subset\partial_{out}H_1$.  Now we attach $H_1$ to $(X,\omega)$ along $p_0,p_1$ by choosing a contactomorphism from the attaching region $\partial_{in}H_1$ to a neighborhood of the points $p_0,p_1$.  This contactomorphism can be chosen so that $\partial_{in}\Lambda\subset\partial_{in}H_1$ is mapped to arcs $a_0,a_1$ of the Legendrians $L_0,L_1$.  In the boundary of the symplectic filling that results, we will have a Legendrian knot
\[
L = (L_0\setminus a_0) \cup \partial_{out}\Lambda \cup (L_1\setminus a_1),
\]
and it is this knot that we consider to be the result of surgering $L_0$ and $L_1$ along $p_0$ and $p_1$.

\begin{proof}[Proof of Lemma \ref{lemma:round-is-weinstein}]
Once we identify our round symplectic 1-handle with a symplectic handle in the sense of Avdek, this fact follows from the discussion in \cite[Section 7.2]{avdek2012liouville}.  We will present Avdek's argument in our particular case.\\

As mentioned in Section \ref{sec:background-jsj}, Avdek defines an abstract round symplectic 1-handle as follows.  Let $(\Sigma,\beta)=(D^*S^1,\lambda_{can})$ be the unit disk bundle in $(T^*S^1,\lambda_{can})$, and consider the contact manifold
\begin{equation}\label{eq:nbhd-of-liouville}
(N(\Sigma)=[-\epsilon,\epsilon]\times\Sigma,\alpha=dz+\beta),
\end{equation}
where $z$ is the coordinate on $[-\epsilon,\epsilon]$.  Avdek rounds the edges of $N(\Sigma)$ to obtain $\mathcal{N}(\Sigma)$, and then defines the symplectic manifold
\begin{equation}\label{eq:symplectic-handle}
(H_\Sigma,\omega_\beta) = ([-1,1]\times\mathcal{N}(\Sigma),d\theta\wedge dz+d\beta),
\end{equation}
where $\theta$ is the coordinate on $[-1,1]$.  After more edge-rounding, this is an abstract copy of a round symplectic 1-handle, to be attached along the ends $\{\pm 1\}\times\mathcal{N}(\Sigma)$.  In particular, we have the Liouville form
\[
\lambda_\Sigma = -\theta dz - 2zd\theta + \beta,
\]
and $(H_\Sigma,\lambda_\Sigma)$ carries a Liouville vector field $Z$ which points into $H_\Sigma$ along $\{\pm 1\}\times\mathcal{N}(\Sigma)$ and out of $H_\Sigma$ along $[-1,1]\times\partial\mathcal{N}(\Sigma)$.\\

Now $(\Sigma,\beta)$ has an obvious handle decomposition as a Weinstein domain, given by attaching a Weinstein 1-handle to a Weinstein 0-handle, and yielding the filtration
\[
(\mathbb{D}^2,\lambda_{std})=(\Sigma_0,\beta_0)\subset(\Sigma,\beta)
\]
of $(\Sigma,\beta)$.  By carrying out the constructions of (\ref{eq:nbhd-of-liouville}) and (\ref{eq:symplectic-handle}) for $(\Sigma_0,\beta_0)$, we obtain filtrations
\[
(\mathbb{D}^3,\alpha_{std})=(\mathcal{N}(\Sigma_0),\alpha_0) \subset (\mathcal{N}(\Sigma),\alpha)
\quad\text{and}\quad
(H_{\Sigma_0},\lambda_{\Sigma_0}) \subset (H_\Sigma,\lambda_\Sigma),
\]
and it is clear that $(H_{\Sigma_0},\lambda_{\Sigma_0})$ is symplectomorphic to a Weinstein 1-handle.  It remains to verify that we obtain $(H_\Sigma,\lambda_\Sigma)$ from $(H_{\Sigma_0},\lambda_{\Sigma_0})$ by attaching a Weinstein 2-handle.\\

To this end, we identify a Legendrian ribbon $(\Sigma^i,\beta^i)$ of $L_i\subset(M,\xi)$ with $(\Sigma,\beta)$, using the filtration of $(\Sigma,\beta)$ to define $(\Sigma^i_0,\beta^i_0)\subset(\Sigma^i,\beta^i)$, for $i=0,1$.  Now the Liouville hypersurfaces $(\Sigma^0,\beta^0)$ and $(\Sigma^1,\beta^1)$ admit standard neighborhoods $N(\Sigma^0),N(\Sigma^1)\subset(M,\xi)$ along which a round symplectic 1-handle may be attached to $(X,\omega)$ to yield $(X_r,\omega_r)$.  On the other hand, let $(X_1,\omega_1)$ denote the result of attaching the Weinstein 1-handle $(H_{\Sigma_0},\lambda_{\Sigma_0})$ to $(X,\omega)$ along $N(\Sigma^0_0)$ and $N(\Sigma^1_0)$.  By definition, we obtain $(X_w,\omega_w)$ from $(X_1,\omega_1)$ by attaching a Weinstein 2-handle, but the filtration $(H_{\Sigma_0},\lambda_{\Sigma_0})\subset(H_\Sigma,\lambda_\Sigma)$ above allows us to view $(X_1,\omega_1)$ as living inside of $(X_r,\omega_r)$.  We will use this perspective to see $(X_r,\omega_r)$ as the result of attaching a Weinstein 2-handle to $(X_1,\omega_1)$ and thus obtain the desired symplectomorphism.\\

For $i=0,1$, let $\Lambda_i\subset L_i\subset(\Sigma^i,\beta^i)$ be the core disk of the 1-handle attached to $(\Sigma^i_0,\beta^i_0)$ to yield $(\Sigma^i,\beta^i)$.  This is a Legendrian chord in the boundary of $(X_1,\omega_1)$, and we identify $\Lambda_i$ inside of $N(\Sigma^i)$ as
\[
\Lambda_i = \{z=0\}\times\Lambda_i \subset [-\epsilon,\epsilon]\times\Sigma^i = N(\Sigma^i).
\]
At the same time, consider the disk
\[
\tilde{\Lambda} = [-1,1]\times\Lambda \subset (H_{\Sigma}\setminus\Int H_{\Sigma_0}),
\]
where $\Lambda\subset(\Sigma,\beta)$ is the analogous chord in $\Sigma$.  Viewing $(X_1,\omega_1)$ as a subset of $(X_r,\omega_r)$, we see that $(X_1,\omega_1)\cap\tilde{\Lambda}=\Lambda_0\sqcup\Lambda_1$, and that $\tilde{\Lambda}$ represents the core disk of the Weinstein 2-handle $(H_\Sigma \setminus\Int H_{\Sigma_0},\lambda_{\Sigma})$.  The latter statement follows from the fact that $\Lambda$ is the core disk of the Weinstein 1-handle $(\Sigma\setminus\Sigma_0,\beta)$, meaning that $\beta$ vanishes along $\Lambda$, and thus $\lambda_\Sigma$ vanishes along $\tilde{\Lambda}$.  Up to smoothing, this disk has boundary
\[
\partial\tilde{\Lambda} = \Lambda_0 \cup ([-1,1]\times\partial\Lambda) \cup \Lambda_1 \subset (X_1,\omega_1),
\]
which is the result in $(X_1,\omega_1)$ of surgering $L_0$ to $L_1$.  So $(X_r,\omega_r)$ is obtained from $(X_1,\omega_1)$ by attaching a Weinstein 2-handle along this surgered knot, as desired, and thus round 1-handle attachment may be realized as Weinstein 1-handle attachment followed by Weinstein 2-handle attachment.
\end{proof}

We are now prepared to prove Proposition \ref{prop:move-2-handle}.

\begin{proof}[Proof of Proposition \ref{prop:move-2-handle}]
We continue using the notation established before the statement of Proposition \ref{prop:move-2-handle}.  In particular, we have a symplectic filling $(\overline{X},\overline{\omega})$ with Legendrian knots $\overline{L}_0,\overline{L}_1\subset\partial(\overline{X},\overline{\omega})$ such that $(X,\omega)$ and $(X',\omega')$ are obtained from $(\overline{X},\overline{\omega})$ by attaching a Weinstein 2-handle along $\overline{L}_0$ and $\overline{L}_1$, respectively.\\

Now since Lemma \ref{lemma:round-is-weinstein} tells us that round symplectic 1-handle attachment consists of a Weinstein 1-handle attachment followed by a Weinstein 2-handle attachment, we see that $(W,\omega)$ and $(W',\omega')$ are each obtained from $(\overline{X},\overline{\omega})$ by a sequence of Weinstein handle attachments.  In particular, we obtain $(W,\omega)$ by attaching a Weinstein 2-handle along $\overline{L}_0$, then attaching a Weinstein 1-handle along points in $L_0,L_1\subset\partial(X,\omega)$ --- knots which are Legendrian pushoffs of $\overline{L}_0,\overline{L}_1$ --- and then attaching a Weinstein 2-handle along the resulting surgered knot.  By considering $\overline{L}_1$ instead of $\overline{L}_0$, we obtain $(W',\omega')$.  In either case, we may reorder our handle attachments so that the round symplectic 1-handle is attached first to produce a filling $(\overline{X}',\overline{\omega}')$, leaving us to attach a Weinstein 2-handle along either $\overline{L}'_0$ or $\overline{L}'_1$ --- these being Legendrian pushoffs of the corresponding knots in $(\overline{X},\overline{\omega})$.  But since we have attached a round symplectic 1-handle along $\overline{L}_0,\overline{L}_1$, their pushoffs are Legendrian isotopic in $\partial(\overline{X}',\overline{\omega}')$, with the isotopy being witnessed by a cylinder in the boundary of the round 1-handle.  So $(W,\omega)$ and $(W',\omega')$ are obtained from $(\overline{X},\overline{\omega})$ by the same sequence of Weinstein handle attachments, and thus are symplectomorphic.
\end{proof}

The practical upshot of Proposition \ref{prop:move-2-handle} is this: for $1\leq i\leq m-1$, the lens space $(L(p_i,q_i),\xi_i)$ produced by Theorem \ref{main-theorem} comes with distinguished Legendrian knots $L^i_-,L^i_{+}\subset L(p_i,q_i)$ which are used to produce $(L(p_{i+1},q_{i+1}),\xi_{i+1})$ as described above.  Proposition \ref{prop:move-2-handle} says that if we have a filling of $(L(p_i,q_i),\xi_i)$ in which $L^i_+$ bounds a Lagrangian disk, then the filling of $(M,\xi)$ produced by round 1-handle attachment is also obtained from the corresponding filling of $(L(p_{i+1},q_{i+1}),\xi_{i+1})$.  In particular, if we are using Theorem \ref{main-theorem} to tabulate the fillings of $(M,\xi)$, then any filling of $(L(p_i,q_i),\xi_i)$ in which $L^i_+$ bounds a Lagrangian disk may be ignored, as there is a filling of $(L(p_{i+1},q_{i+1}),\xi_{i+1})$ which will produce the same filling of $(M,\xi)$.  Concretely, we have the following corollary.

\begin{corollary}\label{cor:overcounting}
Let $(M,\xi)$ be a virtually overtwisted hyperbolic torus bundle, and let
\[
(L(p_1,q_1),\xi_1),\ldots,(L(p_m,q_m),\xi_m)
\]
be the list of tight lens spaces produced by Theorem \ref{main-theorem}, with distinguished Legendrian knots $L^1_-,L^1_+,\ldots,L^m_-,L^m_+$.  For $1\leq i\leq m-1$, if $(W_i,\omega_i)$ is a strong symplectic filling of $(L(p_i,q_i),\xi_i)$ in which $L^i_+$ bounds a Lagrangian disk, then there is a strong filling $(W_{i+1},\omega_{i+1})$ of $(L(p_{i+1},q_{i+1}),\xi_{i+1})$ such that $(W_i,\omega_i)$ and $(W_{i+1},\omega_{i+1})$ yield symplectomorphic fillings of $(M,\xi)$ after round symplectic 1-handle attachment along $L^i_-,L^i_+$ or $L^{i+1}_-,L^{i+1}_+$.
\end{corollary}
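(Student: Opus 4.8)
The plan is to recognize Corollary~\ref{cor:overcounting} as a direct application of Proposition~\ref{prop:move-2-handle} to the particular pair of fillings singled out in the discussion preceding that proposition, together with the observation that the round $1$-handle attaching data is the one prescribed by Theorem~\ref{thm:jsj}. Fix $i$ with $1\le i\le m-1$ and a strong filling $(W_i,\omega_i)$ of $(L(p_i,q_i),\xi_i)$ in which $L^i_+$ bounds a Lagrangian disk $D$, and put this into the notation established before Proposition~\ref{prop:move-2-handle} by taking $(X,\omega)=(W_i,\omega_i)$, $D$ the given disk, $L_0=L^i_+$, and $L_1=L^i_-$. Since $L^i_-$ and $L^i_+$ are the cores of the disjoint standard solid tori glued onto the mixed torus when $(M,\xi)$ is cut open, they may be taken disjoint, with disjoint standard neighborhoods, so the surgeries below do not interfere.

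Next I would run the construction. Invoking \cite[Theorem~3.1]{conway2017symplectic}, a neighborhood of $D$ is identified with a neighborhood of the cocore of a Weinstein $2$-handle; removing it yields $(\overline{X},\overline{\omega})$, whose boundary is the result of $(+1)$-surgery along $L^i_+$, and attaching a Weinstein $2$-handle along $L^i_-$ then effects a $(-1)$-surgery there. As recalled at the start of Section~\ref{sec:overcounting}, this pair of surgeries is precisely the surgery description of $(L(p_{i+1},q_{i+1}),\xi_{i+1})$ in terms of $(L(p_i,q_i),\xi_i)$, so the filling $(X',\omega')$ produced by the construction is a strong filling of $(L(p_{i+1},q_{i+1}),\xi_{i+1})$; this is the filling $(W_{i+1},\omega_{i+1})$ in the statement. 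It remains to match up the attaching curves: the pushoffs $L'_0,L'_1\subset\partial(X',\omega')$ appearing in Proposition~\ref{prop:move-2-handle} are Legendrian pushoffs of $L^i_+$ and $L^i_-$, and I would identify them with the distinguished knots $L^{i+1}_+,L^{i+1}_-$ of $(L(p_{i+1},q_{i+1}),\xi_{i+1})$. This is where the bookkeeping lives: every lens space on the list produced by Theorem~\ref{main-theorem} arises by cutting $(M,\xi)$ open along the \emph{same} mixed torus $T^2\times\{t_0\}$, so all of them share the common piece $T^2\times[t_0,t_0+1]$ and differ only in the solid tori $S_{t_0},S_{t_0+1}$ glued back on---equivalently, only in the meridian slope lying in $[\Lambda^s,\Lambda^u]$. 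Passing from the $i$-th to the $(i+1)$-st lens space replaces that meridian by the next admissible Farey neighbor of $s(T^2)$, and this is realized by exactly the $(\pm 1)$-surgery pair above, so the cores of the regrued solid tori are the core and cocore curves of the handles used in the construction, namely $L'_0$ and $L'_1$.

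With these identifications in place the corollary is immediate: Theorem~\ref{thm:jsj} guarantees that round symplectic $1$-handle attachment to $(W_i,\omega_i)$ along $L^i_-,L^i_+$ and to $(W_{i+1},\omega_{i+1})$ along $L^{i+1}_-,L^{i+1}_+$ each yields a strong symplectic filling of $(M,\xi)$, and Proposition~\ref{prop:move-2-handle}---applied in the dictionary above, so that these two round $1$-handle fillings are exactly $(W,\omega_W)$ and $(W',\omega'_W)$---identifies the two fillings up to symplectomorphism. The main obstacle is precisely the last identification of attaching curves: one must check not only that the ``remove a Lagrangian disk, attach a $2$-handle'' move lands on the right contact manifold $(L(p_{i+1},q_{i+1}),\xi_{i+1})$, but that it produces the correct pair of Legendrian knots along which the next round $1$-handle is attached; this amounts to verifying that the inductive description of the list in the proof of Theorem~\ref{main-theorem} is compatible with that move. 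Everything else is a direct appeal to Proposition~\ref{prop:move-2-handle} and Theorem~\ref{thm:jsj}.
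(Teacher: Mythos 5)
Your proposal is correct and takes essentially the same approach as the paper: it sets up the dictionary $(X,\omega)=(W_i,\omega_i)$, $L_0=L^i_+$, $L_1=L^i_-$ and then appeals directly to Proposition~\ref{prop:move-2-handle}, exactly as the paper does in the paragraph preceding the corollary. The only difference is that you spell out the identification of the pushoffs $L'_0,L'_1$ with the distinguished knots $L^{i+1}_+,L^{i+1}_-$, which the paper leaves implicit in the construction; this is a fair and helpful elaboration rather than a genuinely different argument.
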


Corollary \ref{cor:overcounting} provides just one answer to the following question: under what conditions do fillings $(W_i,\omega_i)$ and $(W_{i+1},\omega_{i+1})$ of $(L(p_i,q_i),\xi_i)$ and $(L(p_{i+1},q_{i+1}),\xi_{i+1})$, respectively,  yield the same filling of $(M,\xi)$ after round symplectic 1-handle attachment?  A full answer to this question, along with a classification of the symplectic fillings of tight lens spaces, will complete the classification of fillings for virtually overtwisted contact structures on torus bundles.

\bibliographystyle{alpha}
\bibliography{../../references}
\end{document}